\def\KK{{\mathbb K}}
\def\RR{{\mathbb R}}
\def\PP{\mathcal P}
\def\QQ{\mathcal Q}
\def\SS{\mathcal S}
\def\Var{{\mathfrak V}}
\def\tT{{\mathcal T}}
\def\lL{{\mathcal L}}
\def\kK{{\mathcal K}}
\def\mM{{\mathcal M}}
\def\nN{{\mathcal N}}
\def\Id{{\mathcal I}}
\def\Mat{{\mathrm{Mat}}}
\def\rank{{\mathrm{rank}}}
\def\cstar{{\mathrm{star}}}
\def\nnull{{\mathbf{0}}}
\DeclareMathOperator\Sep{{\mathrm{Sep}}}
  \newenvironment{note}[1][Note]
   {\bigskip\begin{center}\begin{boxedminipage}{4.5in}\setlength{\parindent}{1em}\noindent\textbf{#1. }}
   {\end{boxedminipage}\end{center}\bigskip}
  \CheckCommand*\refstepcounter[1]{\stepcounter{#1}%
      \protected@edef\@currentlabel
       {\csname p@#1\endcsname\csname the#1\endcsname}%
  }
  \renewcommand*\refstepcounter[1]{\stepcounter{#1}%
    \protected@edef\@currentlabel
      {\csname p@#1\expandafter\endcsname\csname the#1\endcsname}%
  }
  \def\labelformat#1{\expandafter\def\csname p@#1\endcsname##1}
  \DeclareRobustCommand\Ref[1]{\protected@edef\@tempa{\ref{#1}}%
     \expandafter\MakeUppercase\@tempa
  }
  \newcommand{\numberlike}[2]{%
     \expandafter\def\csname c@#1\endcsname{%
         \expandafter\csname c@#2\endcsname}%
  }
  \def\DefaultNumberTheoremWithin{section}
  \theoremstyle{plain}
  \newtheorem{Lemma}{Lemma}
     \numberwithin{Lemma}{\DefaultNumberTheoremWithin}
     \numberwithin{Claim}{\DefaultNumberTheoremWithin}
  \newtheorem{Theorem}{Theorem}
     \numberwithin{Theorem}{\DefaultNumberTheoremWithin}
  \newtheorem{Corollary}{Corollary}
     \numberwithin{Corollary}{\DefaultNumberTheoremWithin}
  \newtheorem{Proposition}{Proposition}
     \numberwithin{Proposition}{\DefaultNumberTheoremWithin}
     \numberwithin{Conjecture}{\DefaultNumberTheoremWithin}
  \newtheorem{Situation}{Situation}
     \numberwithin{Situation}{\DefaultNumberTheoremWithin}
  \theoremstyle{definition}
     \numberwithin{Definition}{\DefaultNumberTheoremWithin}
  \theoremstyle{definition}
     \numberwithin{Question}{\DefaultNumberTheoremWithin}
  \theoremstyle{definition}
     \numberwithin{Problem}{\DefaultNumberTheoremWithin}
     \theoremstyle{remark} \newtheorem{Remark}{Remark}
     \numberwithin{Remark}{\DefaultNumberTheoremWithin}
  \theoremstyle{remark}
     \numberwithin{Example}{\DefaultNumberTheoremWithin}
     \numberwithin{Case}{Lemma}
     \numberwithin{Step}{Lemma}
\title{The Varchenko Determinant for Oriented Matroids}
\author{Winfried Hochst\"attler}
    \address{FernUniversit\"at in Hagen \\ 
          Fakult\"at f\"ur Mathematik und Informatik \\
          58084 Hagen\\
          Germany}
     \email{Winfried.Hochstaettler@fernuni-hagen.de}
 \author{Volkmar Welker}
     \address{Fachbereich Mathematik und Informatik\\
              Philipps-Universit\"at Marburg\\
              35032 Marburg\\
              Germany}
     \email{welker@mathematik.uni-marburg.de}
\begin{document}
\begin{abstract}
  We generalize the Varchenko matrix of a hyperplane arrangement to
  oriented matroids. We show that the celebrated determinant formula
  for the Varchenko matrix, first proved by Varchenko, generalizes to
  oriented matroids. It follows that the determinant only depends on
  the matroid underlying the oriented matroid and analogous formulas
  hold for closed supertopes in oriented matroids.  We follow a proof 
	strategy for
  the original Varchenko formula first suggested by Denham and Hanlon.
  Besides several technical lemmas this strategy also requires a
  topological result on supertopes which is of independent interest.
  We show that a supertope considered as a subposet of the tope poset
  has a contractible order complex.
\end{abstract}

\maketitle

\section{Introduction}
\label{sec:intro}

Let $\lL$ be an oriented matroid on a finite ground set $E$ given as 
a set of covectors $X = (X_e)_{e \in E} \in \{+,-,0\}^E$.
We denote by $\tT = \tT(\lL)$ the set of topes in 
$\lL$ and call for two topes $P = (P_e)_{e \in E}$ and 
$Q = (Q_e)_{e \in E}$ the set $\Sep(P,Q) = \{e\in E~|~P_e = -Q_e \ne 0\}$ 
the \emph{separator} of $P$ and $Q$. 
For the oriented matroid $\lL$ and a field $\KK$
we consider the polynomial ring $\KK[U_e ~|~e \in E]$ in the set of variables
$U_e$, $e \in E$.
We call the following matrix $\Var = \Var(\lL)$ the \emph{Varchenko matrix} of $\lL$.
The matrix $\Var$ is the
$(\# \tT \times \# \tT)$-matrix over $\KK[U_e ~|~e \in E]$ with
rows and columns indexed by the topes $\tT$ in a fixed linear order. 
For $P ,Q \in \tT$ the
entry $\Var_{P,Q}$ in row $P$ and column $Q$ is given by
$\displaystyle{\prod_{e \in \Sep(P,Q)} U_e}$.
In particular, all entries $\Var_{P,P}$ on the diagonal are equal 
to $1$.
For $F \in \lL$ we set $a(F) := \displaystyle{\prod_{\genfrac{}{}{0pt}{}{e \in E}{F_e = 0}} U_e}$. 
In this paper we prove:

\begin{Theorem}
   \label{thm:varchenko}
   Let $\Var$ be the Varchenko matrix of the oriented matroid with covector set $\lL$.
   Then
   \begin{eqnarray*}
     \det (\Var) & = & \prod_{F \in \lL} (1-a(F)^2)^{b_F}. 
   \end{eqnarray*}
   for nonnegative integers $b_F$. 
\end{Theorem}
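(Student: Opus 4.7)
The plan is to follow the Denham--Hanlon strategy, which decomposes $\Var$ into a product (or block-triangular factorization) of simpler matrices---one associated to each covector $F \in \lL$---so that the factor indexed by $F$ contributes exactly $(1-a(F)^2)^{b_F}$ to the determinant. To organize the bookkeeping I would first fix a base tope $B \in \tT$ and order rows and columns of $\Var$ along the tope poset rooted at $B$ (with $P \leq Q$ whenever $\Sep(B,P) \subseteq \Sep(B,Q)$), so that topes differing in only one coordinate sit adjacently.

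The first substantial step treats the covectors of corank one. For each $e \in E$ and each pair of topes $P, P'$ whose separator is exactly $\{e\}$, I would perform the row operation replacing row $P'$ by row $P'$ minus $U_e$ times row $P$. A short verification shows that this zeroes out every column $Q$ with $Q_e = P_e$ and rescales every column $Q$ with $Q_e = P'_e$ by $(1-U_e^2)$. Iterating over all such pairs produces a block-triangular form from which a power of $(1-U_e^2) = 1 - a(F)^2$ (for the unique corank-one covector $F$ with $F_e = 0$) is extracted, and leaves a residual matrix recognizable as a Varchenko-type matrix of a smaller oriented matroid, morally a contraction.

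For the inductive step one iterates on covectors of higher corank. For such an $F$, the \emph{supertope} $\tT_F$---the set of topes having $F$ as a face, i.e., those $P \in \tT$ with $P_e = F_e$ whenever $F_e \ne 0$---carries the local contribution of $F$. The exponent $b_F$ should arise as a signed sum over the poset structure of $\tT_F$, morally a reduced Euler characteristic of its order complex. The topological result promised in the abstract, that this order complex is contractible, collapses this alternating sum to a single nonnegative integer and rules out spurious negative exponents in the final product formula.

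The hardest step will be the contractibility of $\tT_F$ as a subposet of the tope poset. The full tope poset has well-studied topology (it is even shellable), but restricting to a supertope need not preserve any obvious cone structure. The argument will likely proceed either by exhibiting an explicit deformation retraction, by constructing a discrete Morse matching with a single critical vertex, or by a fibered induction on the corank of $F$. A secondary difficulty is ensuring that the row and column operations associated to different covectors compose consistently---for instance by processing them in order of increasing corank, or along a shelling of $\lL$---so that the final matrix is upper-triangular with diagonal entries matching the stated product and no residual cross terms.
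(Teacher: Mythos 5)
Your overall strategy is in the right family — a factorization of $\Var$ with one factor per element of the ground set, block triangularity organized by covectors, contractibility of the supertope as the topological input, residual matrices of the form $\Var_{U_e=0}$ — but two concrete pieces of your plan would not survive contact with the details, and they are precisely the places where the paper's machinery is non-trivial.

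First, the pairwise row-operation step is too weak. You propose, for each $e$, to take pairs $P,P'$ with $\Sep(P,P')=\{e\}$ and replace row $P'$ by row $P'$ minus $U_e$ times row $P$; this does indeed zero out columns $Q$ with $Q_e=P_e$ and rescale the others by $1-U_e^2$. But for a general oriented matroid (or hyperplane arrangement) a tope $P'\in\tT(\emptyset,\{e\})$ need not have \emph{any} neighbour $P$ across $e$ — that happens only when $e$ defines a wall of $P'$. For the remaining topes your elementary operation simply does not exist, and the correct replacement is a linear combination of \emph{all} rows $Q\in\tT(\emptyset,\{e\})$ with coefficients $-\mu((\hat{0},Q)_{R,e})\Var_{Q,R}$. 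That this combination does the job is exactly the content of \ref{lem:fac1}, whose proof reduces to the identity in \ref{cor:crucial},
\[
\sum_{\genfrac{}{}{0pt}{}{Q \in \tT(\emptyset,\{e\})}{ S= \Sep(P,Q) \cap \Sep(Q,R)}}  \mu((\hat{0},Q)_{R,e}) = \begin{cases} -1 & \text{if } S = \emptyset,\\ 0 & \text{if } S \neq \emptyset, \end{cases}
\]
and it is here — and only here — that the supertope contractibility \ref{thm:supertope} enters. So the contractibility is not used to ``rule out spurious negative exponents'' or to collapse an alternating sum defining $b_F$; it is used to show that the Möbius-weighted column operations reproduce $\Var^{e,(-,+)}$ exactly, i.e.\ to establish $\Var = \Var_{U_e=0}\cdot\mM^e$.

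Second, your description of the exponent $b_F$ as ``morally a reduced Euler characteristic of the order complex of the supertope'' is not how it comes out. After the factorization $\Var=\mM^{e_1}\cdots\mM^{e_r}$ of \ref{pro:fac}, each $\mM^e$ is block lower triangular (\ref{lem:block}) with diagonal blocks $\mM^{F,e}$ for covectors $F$ with $F_e=0$; this block triangularity is forced by \ref{thm:moebius}, not by supertope contractibility. Each block decomposes into $2\times 2$ blocks paired by $R\leftrightarrow F\circ(-R)$, giving $\det(\mM^{F,e})=(1-a(F)^2)^{b_{F,e}}$ with $b_{F,e}=\tfrac{1}{2}\#\tT^{F,e}$ — a plain cardinality, later identified in \ref{cor:varchenko} with $\beta(\Mat(\lL|_{z(F)}))$. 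So $b_F$ is manifestly a nonnegative integer by construction, not by a topological vanishing. Without the Möbius-weighted matrix $M^e$ and the sum identity \ref{cor:crucial} your outline cannot close the first step, and without the separate triangularization argument of \ref{lem:block} and \ref{lem:formula} the exponent does not materialize as you describe.
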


Note, that a factor $(1-a(F)^2)$ is zero if and only if $F$ is a tope. 
In this case it turns out that $b_F = 0$. By the convention $0^0 = 1$
it follows that $\det(\Var)
\neq 0$.  In \ref{cor:varchenko} we give an alternative formulation of the 
product formula which will shed more light on the exponents $b_F$. 
In particular, it will follow that $\det
(\Var)$ only depends on the matroid underlying the oriented matroid
defined by $\lL$.

If $\lL$ is given as the set of covectors of a hyperplane arrangement in some 
$\RR^n$ then $\Var$ is the Varchenko matrix of the hyperplane arrangement
and \ref{thm:varchenko} is Varchenko's result from \cite{V}. 
Initially, Varchenko was motivated by the case of the reflection
arrangements of the symmetric group. In that case the matrix relates to 
Drinfeld--Jimbo quantized Kac–Moody Lie algebras in type $A$. 
This relation had been unraveled by Schechtman and Varchenko in \cite{SV2}. There it is shown that the
kernel of specializations of the matrix describe the Serre relations for the
algebra. Motivated by these facts Zagier \cite{Z} gave a proof of the
determinant formula for the reflection arrangement of the symmetric group 
based on calculations in its group algebra. 
Work of Hanlon and Stanley \cite{HS} ties in the matrix and its kernel 
with combinatorial aspects of the representation theory of the symmetric group
when all variables are substituted by a fixed complex number.
For general arrangements of hyperplanes, Denham and Hanlon \cite{DH} show that the matrix and its determinant
can be used in an approach to determine the Betti numbers of the Milnor fiber
of the complexified arrangements; that is the fiber in complex space of the product of linear
forms defining the hyperplanes at complex numbers different from $0$.

After the original proof in \cite{V} there were attempts in \cite{DH} and \cite{G}
to provide a cleaner proof of Varchenko's original result.  Our
approach generalizes ideas from \cite{DH} and \cite{G} to oriented
matroids and replaces the problematic parts from both works by
alternative arguments. 
Recently, a new proof using a different
strategy was published in \cite{AM}. We have not studied this proof
thoroughly and cannot judge if it generalizes to oriented matroids as
well. This paper is not the first to study oriented matroid
generalizations of the Varchenko determinant formula. In the works
\cite{Ve1,Ve2} an approach is sketched for proving \ref{thm:varchenko}
originally for general oriented matroids in \cite{Ve1} and restricted
to oriented matroids that allow a representation as a pseudo point
configuration, only, in the subsequent \cite{Ve2}.  Despite several
attempts we were not able to follow the argumentation of either
thesis. Philosophically, our work parallels the article of Bry\l awski and Varchenko
\cite{BV} who give a matroid generalization of a determinant formula by 
Schechtman and Varchenko \cite{SV1} for yet another important class of matrices arising in 
representation theory. That paper probably also motivated \cite{Ve1} and
\cite{Ve2}. 

Besides amendments and the generalization to oriented matroids the
key new ingredient in our proof of \ref{thm:varchenko} is the following result
which we consider of independent interest.
For its formulation, let $R \in \tT$ be a fixed base tope and consider $\tT$ as
a partially ordered set with order relation $P \preceq_R Q$ if $\Sep(R,P)
\subseteq \Sep(R,Q)$. We write $\tT_R$ if we consider $\tT$ with this
partial order. 
For disjoint subsets $S^+, S^- \subseteq E$ such that $S^+ \cup S^-\ne
\emptyset$ the set of topes
\[\tT(S^+,S^-):=\left\{T \in \tT \mid T_f = + \text{ for all }f \in S^+\text{ and }T_f = -
  \text{ for all }f \in S^-\right\}\] is called a \emph{supertope}.  By
\cite[Proposition 4.2.6]{thebook} supertopes are exactly the \emph{$T$-convex sets}, i.e.\ the sets of topes that contain any shortest
path between any of two of its members.  We call a supertope
$\tT(S^+,S^-)$ a \emph{closed supertope}, if for all supertopes $\tT(\tilde
S^+,\tilde S^-)$ such that $S^+ \subseteq\tilde S^+, S^-
\subseteq\tilde S^- $ but $(S^+, S^-) \ne (\tilde S^+, \tilde S^-)$
necessarily $\tT(\tilde
S^+,\tilde S^-) \subsetneq \tT(S^+, S^-)$.
In case the oriented matroid is given by an arrangement of hyperplanes then 
a closed supertope corresponds to a closed cone cut out by the hyperplanes
from the arrangement.
Note that our notion of closed supertope is more general than the notion of a cone  from \cite[Definition 10.1.1 (iii)]{thebook}.

One would expect that
$T$-convex sets as subsets of the tope poset $\tT_R$ are contractible.
We will show that this is indeed the case.

\begin{Theorem}
  \label{thm:supertope}
  Let $R \in \tT$ be the base tope of the poset $\tT_R$ and
  $\tT(S^+,S^-)\ne \emptyset$ be a supertope. Then 
  $\tT(S^+,S^-)$ considered as subposet of
  $\tT_R$ is contractible.
\end{Theorem}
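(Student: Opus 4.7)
My plan is to establish contractibility by exhibiting a unique minimum of the subposet $\tT(S^+,S^-) \cap \tT_R$; a finite poset with a minimum has a simplicial cone as its order complex, which is contractible. Preliminary to this, I would note that $\tT(S^+,S^-) \cap \tT_R$ is order-convex in $\tT_R$: if $P, Q \in \tT(S^+,S^-)$ satisfy $P \preceq_R M \preceq_R Q$ in $\tT_R$, a case analysis on the sign $R_f$ for each $f \in S^+ \cup S^-$ (using the separator-inclusion order) forces $M_f$ to the prescribed sign, so $M \in \tT(S^+,S^-)$. This makes the minimization below unambiguous.

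To construct the candidate minimum, let $T^* \in \tT(S^+,S^-)$ minimize $|\Sep(R,T)|$ over the supertope; I would prove $\Sep(R,T^*) \subseteq \Sep(R,T)$ for every $T \in \tT(S^+,S^-)$. Suppose for contradiction that $e \in \Sep(R,T^*) \setminus \Sep(R,T)$ for some $T \in \tT(S^+,S^-)$. Then $T^*_e = -R_e = -T_e$, placing $e \in \Sep(T^*,T)$. Applying the covector elimination axiom of oriented matroids to $T^*$ and $T$ at $e$ produces a covector $Z$ with $Z_e = 0$ that agrees with $T^*$ on $E \setminus \Sep(T^*,T)$; in particular $Z$ inherits the supertope's sign constraints on $S^+ \cup S^-$. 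Extending $Z$ to a tope $T^{**}$ with $T^{**}_e = R_e$, the $T$-convexity of supertopes (Proposition 4.2.6 of \cite{thebook}, cited in the introduction) keeps $T^{**}$ in the supertope, while a separator count comparing $\Sep(R,T^*)$ and $\Sep(R,T^{**})$ coordinate by coordinate should yield $|\Sep(R,T^{**})| < |\Sep(R,T^*)|$, contradicting the choice of $T^*$.

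The main obstacle is precisely this separator count. For coordinates $f \in \Sep(T^*,T) \setminus \{e\}$ the covector $Z$ is only partially determined by the elimination, so the extension $T^{**}$ may pick up new separator elements against $R$ on these coordinates and potentially undo the gain from eliminating $e$. The oriented matroid structure --- in particular a judicious choice of cocircuit in the elimination step, possibly applied iteratively --- must be exploited to guarantee a net strict decrease. In configurations where this one-step construction does not succeed (for instance when several minimal elements of the subposet tie in separator size, as can happen in non-uniform arrangements), the fallback strategy is to cover $\tT(S^+,S^-) \cap \tT_R$ by the principal filters above each of its minimal elements, each contractible by the cone argument, and apply the nerve theorem; the required contractibility of intersections reduces to smaller instances of the same problem and is handled by induction on $|S^+ \cup S^-|$ or on the size of the ground set.
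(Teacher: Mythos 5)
Your primary plan---exhibiting a unique minimum of $\tT(S^+,S^-)$ as a subposet of $\tT_R$---cannot succeed, because a supertope need not have a unique minimal element in $\tT_R$. Here is a concrete rank-$2$ counterexample: take three concurrent lines in $\RR^2$, so $E=\{1,2,3\}$ and the six topes are $(+,+,+)$, $(+,+,-)$, $(+,-,+)$, $(-,-,+)$, $(-,-,-)$, $(-,+,-)$. With base tope $R=(+,+,+)$ and $S^+=\emptyset$, $S^-=\{1\}$, the supertope is $\{(-,+,-),(-,-,+),(-,-,-)\}$, whose separators from $R$ are $\{1,3\}$, $\{1,2\}$, $\{1,2,3\}$. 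The first two are incomparable, so the supertope has two distinct minimal elements (and one maximum). This also defeats the ``minimize $|\Sep(R,T)|$'' device: the two minimizers tie and are incomparable, and this happens already in a uniform rank-$2$ arrangement, not just in the ``non-uniform'' cases you set aside. The paper proves, as \ref{lem:supertope}, that under an additional hypothesis (existence of certain boundary topes $T^f$) the supertope has a unique maximum \emph{or} a unique minimum; you must allow for both, and the hypothesis can itself fail, which is exactly what forces the second branch of the paper's argument.

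The fallback you sketch (cover by principal filters over the minimal elements, apply a nerve argument, handle intersections by induction) is not worked out and has genuine gaps. The intersection inside the supertope of the filters above two incomparable minima $P$ and $Q$ is $\{T\in\tT(S^+,S^-)\mid\Sep(R,P)\cup\Sep(R,Q)\subseteq\Sep(R,T)\}$, which in general is not itself a supertope, so the claimed reduction to ``smaller instances of the same problem'' is unsupported; and even granting contractibility of all intersections, the nerve lemma gives only a homotopy equivalence with the nerve, whose own contractibility would require a separate argument. The paper takes a different route entirely: it inducts on $|E\setminus(S^+\cup S^-)|$, deletes an element $f$ not constrained by $(S^+,S^-)$, and applies the Quillen fiber lemma \ref{lem:quillen} to the restriction map to the deletion, with the fibers identified as sub-supertopes by \ref{lem:fiberisst}; the case where this deletion step is blocked for every eligible $f$ is precisely where \ref{lem:supertope} is invoked to produce a unique max or min.
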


The paper is organized as follows. In \ref{sec:basic} we recall some
basic notations and results from oriented matroid theory and poset
topology. We then use tools from poset topology to derive
results on the topology of complexes associated to oriented matroids
in \ref{sec:omtopology}.  In \ref{sec:supertope} we provide the proof
of \ref{thm:supertope} and exhibit why we cannot follow the
argumentation from \cite{DH} and \cite{G}. In \ref{sec:varchenko} we
prove \ref{thm:varchenko}. The key step in the proof is a
factorization of the Varchenko matrix, one factor for each element of
the ground set $E$ (Proposition 5.3).  The key ingredient of the
factorization is a result on M\"obius numbers which is a direct
consequence of \ref{thm:supertope} (Corollary 4.5).  Then the
determinant of each factor is analyzed.  M\"obius number implications
of topological results from \ref{sec:omtopology} then show that each
is block upper triangular with controllable block structure (Lemma
5.6). Now \ref{thm:varchenko} follows via basic linear algebra.  As a
corollary we give a description of the numbers $b_F$ which implies
that the determinant only depends on the matroid underlying the
oriented matroid. As a second corollary we show that the result
extends to closed supertopes 
and hence in particular to affine oriented matroids.
 
\section{Background on Oriented Matroids and Poset Topology}
\label{sec:basic}

\subsection{Poset Topology} In this paper we will associate various
partially ordered sets, posets for short, to oriented matroids. For
our purposes it turns out to be useful to consider a poset $\PP$ as a
topological space.  We do this by identifying $\PP$ with its order
complex, respectively the geometric realization of the order complex.
Recall that the \emph{order complex} of a poset $P$ is the simplicial
complex whose chains are the linearly ordered subsets of $\PP$.  Using
this identification we can speak about contractible and homotopy
equivalent posets.  We will employ the following standard tools from
poset topology (see \cite{Bj} for details).  For their formulation we
denote for a poset $\PP$ and $p \in \PP$ by $\PP_{\leq p}$ the
subposet $\{ q \in \PP~|~q \leq p\}$. Analogously defined are $\PP_{<
  p}$,$\PP_{>p}$ and $\PP_{\geq p}$. For $p \leq q$ in $\PP$ we write
$(p,q)_\PP$ for the \emph{open interval} $\PP_{> p} \cap \PP_{< q}$
and $[p,q]_\PP$ for the \emph{closed} interval $\PP_{ \geq p} \cap
\PP_{\leq q}$.

\begin{Proposition}[Quillen Fiber Lemma]
   \label{lem:quillen}
   Let $\PP$ and $\QQ$ be posets and $f : \PP \rightarrow \QQ$ a poset map.
   If for all $q \in \QQ$ we have that
   $f^{-1} (\QQ_{\leq q})$ is contractible, then $\PP$ and $\QQ$ are homotopy
   equivalent.
\end{Proposition}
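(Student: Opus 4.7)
The plan is to use Björner's non-Hausdorff mapping cylinder construction. Define the mapping cylinder poset $M = M(f)$ on the disjoint union $\PP \sqcup \QQ$ by retaining the given partial orders on each summand and adding the relations $p \leq_M q$ if and only if $f(p) \leq_\QQ q$, for $p \in \PP$ and $q \in \QQ$. The strategy is to establish two separate homotopy equivalences $M \simeq \QQ$ and $\PP \simeq M$, from which the conclusion follows by transitivity.

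The equivalence $M \simeq \QQ$ is formal and does not invoke the fiber hypothesis. The order-preserving map $r : M \to \QQ$ defined by $r|_\QQ = \mathrm{id}_\QQ$ and $r(p) = f(p)$ is a retraction of the inclusion $i_\QQ : \QQ \hookrightarrow M$; moreover $i_\QQ \circ r \leq \mathrm{id}_M$ pointwise, the nontrivial case $f(p) \leq_M p$ being built into the definition of $M$. The standard principle that two pointwise-comparable order-preserving poset maps induce homotopic simplicial maps on order complexes (proved via a straight-line homotopy through the comparability data) yields $i_\QQ \circ r \simeq \mathrm{id}_M$, so $\QQ$ is a deformation retract of $M$.

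The equivalence $\PP \simeq M$ uses the fiber hypothesis and I would prove it by induction on $|\QQ|$, with the infinite case reduced to the finite case via a filtered colimit argument (order complexes commute with filtered colimits). The base case $|\QQ|=1$ is immediate since both $\PP$ and $M$ are then contractible. For the inductive step, pick a maximal $q^* \in \QQ$ and consider the subposet $M' := M \setminus \{q^*\}$. The passage $M' \hookrightarrow M$ attaches the closed star of $q^*$, a cone with apex $q^*$, along its link $\Delta(M_{<q^*}) = \Delta(f^{-1}(\QQ_{\leq q^*}) \cup \QQ_{<q^*})$. I would show contractibility of this link by invoking the inductive hypothesis on a suitably chosen auxiliary map with domain $f^{-1}(\QQ_{\leq q^*}) \cup \QQ_{<q^*}$ and image in $\QQ_{<q^*}$, whose fibers remain contractible by the original hypothesis. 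Since attaching a cone along a contractible base is a homotopy equivalence, $M' \hookrightarrow M$ is a homotopy equivalence; iterating over the elements of $\QQ$ in a reverse linear extension reduces $M$ to $\PP$.

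The main obstacle is making this inductive step rigorous: the contractibility of $\Delta(M_{<q^*})$ does not follow from the naive observation that it equals the cone $\Delta(M_{\leq q^*})$ with the apex $q^*$ deleted, since removing a single point from a contractible space need not preserve contractibility. One must carefully set up the auxiliary map (or, equivalently, a Mayer--Vietoris / pushout decomposition of $\Delta(M_{<q^*})$ along the subcomplex $\Delta(f^{-1}(\QQ_{\leq q^*}))$) and appeal to the inductive hypothesis on a strictly smaller instance. A conceptually cleaner alternative bypasses the induction entirely by a discrete-Morse or acyclic-carrier argument, in which the contractible fibers $f^{-1}(\QQ_{\leq f(p)})$ serve as carriers pairing off all simplices of $\Delta(M)$ involving $\QQ$-vertices and leaving $\Delta(\PP)$ as the critical subcomplex. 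Being a classical lemma treated in full in \cite{Bj}, the remaining technicalities are standard.
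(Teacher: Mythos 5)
The paper gives no proof of this proposition; it is invoked as a standard tool with a pointer to Bj\"orner's survey \cite{Bj}, so there is no internal argument to compare against. Your non-Hausdorff mapping-cylinder strategy is a legitimate standard route. The first half, retracting $M = M(f)$ onto $\QQ$ via order homotopy, is sound except for a direction slip: with the convention $p \leq_M q \Leftrightarrow f(p) \leq_\QQ q$, the relation built into $M$ is $p \leq_M f(p)$, so the comparison is $\mathrm{id}_M \leq i_\QQ \circ r$ pointwise, not $i_\QQ \circ r \leq \mathrm{id}_M$. This is harmless for the conclusion (comparable poset maps are homotopic in either direction) but should be corrected.

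The second half has a genuine gap, which you partly flag yourself; but the specific repair you sketch does not work. An order-preserving auxiliary map with domain $M_{<q^*} = f^{-1}(\QQ_{\leq q^*}) \cup \QQ_{<q^*}$ and image in $\QQ_{<q^*}$ cannot exist: any $p \in \PP$ with $f(p) = q^*$ has no candidate image, and is in fact incomparable in $M_{<q^*}$ to every element of $\QQ_{<q^*}$ (since $p \leq_M q$ would force $q^* = f(p) \leq_\QQ q < q^*$), so no retraction along the lines of $r$ is available. A repair that keeps your induction is to strengthen the inductive statement: prove, by induction on $\#\QQ_{\leq q}$, that $M_{<q}$ is contractible for every $q \in \QQ$. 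When $q$ is minimal, $M_{<q} = f^{-1}(\QQ_{\leq q})$, contractible by hypothesis. For general $q$, each $q' \in \QQ_{<q}$ satisfies $(M_{<q})_{<q'} = M_{<q'}$, contractible by the smaller instance; deleting the elements of $\QQ_{<q}$ from $M_{<q}$ one at a time (in decreasing order) therefore replaces a closed star, which is a cone, glued along a contractible link, and so does not change the homotopy type, leaving $f^{-1}(\QQ_{\leq q})$, which is contractible. One final pass of the same deletion over all of $\QQ$ inside $M$ then yields $M \simeq \PP$, and the two halves combine as you intended. (For this paper all posets are finite, so the filtered-colimit caveat can be dropped.)
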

 
By simple induction on $\#\SS$ one derives the following corollary.
 
\begin{Corollary}
  \label{cor:subquillen}
  Let $\PP$ be a poset and $\SS$ a subset such that $\PP_{< s}$ is
  contractible for all $s \in \SS$. Then $\PP\setminus
    \SS$ and $\PP$ are homotopy equivalent.
\end{Corollary}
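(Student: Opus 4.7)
The plan is to induct on $\#\SS$, with all of the actual work happening in the case $\#\SS = 1$ and the inductive step being a short bookkeeping reduction to this base case. For the base case, with $\SS = \{s\}$, I would apply the Quillen Fiber Lemma (\ref{lem:quillen}) to the order-preserving inclusion $\iota\colon \PP \setminus \{s\} \hookrightarrow \PP$. For every $q \in \PP$ the preimage is $\iota^{-1}(\PP_{\leq q}) = \PP_{\leq q} \setminus \{s\}$. If $q = s$ this preimage equals $\PP_{<s}$, which is contractible by hypothesis. If $q \neq s$, then $q$ is still a maximum element of $\PP_{\leq q} \setminus \{s\}$, so this poset is a cone over $\PP_{<q}\setminus\{s\}$ and hence contractible. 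The Quillen Fiber Lemma then yields $\PP \setminus \{s\} \simeq \PP$.

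For the inductive step, assume $\#\SS \geq 2$ and that the statement is known for all smaller subsets. Choose an element $s \in \SS$ that is \emph{minimal in $\SS$} with respect to the partial order of $\PP$; such an $s$ exists since $\SS$ is finite. Set $\SS' := \SS \setminus \{s\}$ and $\QQ := \PP \setminus \SS'$. The hypothesis on $\SS$ still holds for the smaller subset $\SS'$, so by the inductive hypothesis applied to $\PP$ and $\SS'$, we obtain $\QQ \simeq \PP$. It remains to apply the base case to $\QQ$ and the singleton $\{s\}$. Here the choice of $s$ pays off: since $s$ is minimal in $\SS$, no element of $\SS'$ lies strictly below $s$ in $\PP$, so
\[
  \QQ_{<s} \;=\; \PP_{<s} \setminus \SS' \;=\; \PP_{<s},
\]
which is contractible by assumption. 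The base case then gives $\PP \setminus \SS = \QQ \setminus \{s\} \simeq \QQ \simeq \PP$.

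The only genuinely delicate step is the base case, which requires recognizing that the two possible shapes of the Quillen fiber $\PP_{\leq q} \setminus \{s\}$---either $\PP_{<s}$ itself or a poset with an untouched maximum---both happen to be contractible. The inductive step is then essentially free, once one notices the small trick of removing a minimal element of $\SS$ first, which is precisely what keeps the hypothesis ``$\PP_{<s}$ is contractible'' intact after the previous deletions.
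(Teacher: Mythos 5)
Your proof is correct and follows exactly the route the paper intends: the paper only says ``by simple induction on $\#\SS$'' after stating the Quillen Fiber Lemma, and your argument is the natural fleshing-out, with the Quillen Fiber Lemma handling the singleton case and a careful choice of element (a $\preceq$-minimal one in $\SS$, so that $\QQ_{<s} = \PP_{<s}$) making the inductive step go through. The one subtlety worth noticing is that the order of removal matters, and your ``remove a minimal element last'' trick is one of the two standard fixes (the other being ``remove a maximal element first''); both are fine.
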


We will use poset topology also to prove results on the M\"obius number of
a poset $\PP$. 
For that we take advantage of the following well known numerical consequence of
the fact that two posets are homotopy equivalent. 
For a poset $\PP$ we denote by $\mu(\PP)$ the \emph{M\"obius number} of $\PP$
(see \cite[Chapter 3]{St}).  

\begin{Proposition}
  \label{pr:moebius}
  For two homotopy equivalent posets $\PP$ and $\QQ$ we have
  $\mu(\PP) = \mu(\QQ)$. In particular, if $\PP$ is contractible then
  $\mu(\PP) = 0$. 
\end{Proposition}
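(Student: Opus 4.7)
The plan is to reduce the statement to the fact that the reduced Euler characteristic is a homotopy invariant. The main identification needed is Philip Hall's theorem, which says that for any finite poset $\PP$,
\[
  \mu(\PP) \;=\; \tilde{\chi}\bigl(\Delta(\PP)\bigr),
\]
where $\Delta(\PP)$ is the order complex and $\tilde{\chi}$ denotes the reduced Euler characteristic (see e.g.\ \cite[Chapter~3]{St}). The short way to see this: adjoin formal elements $\hat 0$ and $\hat 1$ to $\PP$, expand $\mu_{\hat\PP}(\hat 0,\hat 1)$ by Philip Hall's alternating chain-count formula, and observe that the resulting sum is exactly $\sum_{i} (-1)^i f_i(\Delta(\PP))$ up to the usual $-1$ shift coming from the reduced versus unreduced convention.

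Given this, the first sentence of the proposition is immediate. If $\PP$ and $\QQ$ are homotopy equivalent, then $\Delta(\PP)$ and $\Delta(\QQ)$ are homotopy equivalent as topological spaces by our convention of identifying a poset with its order complex. Reduced Euler characteristic is a homotopy invariant because it equals $\sum_i (-1)^i \dim_{\mathbb Q} \widetilde H_i(-;\mathbb Q)$ and each reduced Betti number depends only on the homotopy type. Therefore
\[
  \mu(\PP) \;=\; \tilde{\chi}(\Delta(\PP)) \;=\; \tilde{\chi}(\Delta(\QQ)) \;=\; \mu(\QQ).
\]

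For the second assertion, if $\PP$ is contractible then $\Delta(\PP)$ has the homotopy type of a point, so $\widetilde H_i(\Delta(\PP);\mathbb Q)=0$ for every $i$, whence $\tilde{\chi}(\Delta(\PP))=0$. Combined with Hall's identification this gives $\mu(\PP)=0$.

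There is no real obstacle here beyond invoking the two standard facts above; the only thing worth being careful about is matching sign and normalization conventions between $\mu$ (on $\PP$ versus on $\hat\PP = \PP \cup \{\hat 0,\hat 1\}$) and $\tilde\chi$ versus $\chi$, so that the identity $\mu(\PP)=\tilde\chi(\Delta(\PP))$ is applied consistently. Once that bookkeeping is fixed, the argument is a single line.
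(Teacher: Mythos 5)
Your argument is correct and is the standard one: $\mu(\PP)=\tilde\chi(\Delta(\PP))$ by Philip Hall's theorem, reduced Euler characteristic is a homotopy invariant, and for contractible $\PP$ the reduced Betti numbers all vanish so $\tilde\chi=0$. The paper does not actually spell out a proof — it states the proposition as a well-known fact with a citation to Stanley's Chapter~3 — and what you have written is precisely the folklore justification that citation is standing in for. Your explicit remark about matching conventions (the Möbius number $\mu(\PP)$ is $\mu_{\hat\PP}(\hat 0,\hat 1)$ in the poset with $\hat 0$ and $\hat 1$ adjoined, and Hall's alternating chain count produces the reduced Euler characteristic of $\Delta(\PP)$) is exactly the point that needs care, and you have handled it correctly.
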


\subsection{Oriented Matroids} 

As mentioned in \ref{sec:intro} we consider an oriented matroid $\lL$ on ground set $E$ 
as a set of covectors $X = (X_e)_{e \in E}\in \{+,-,0\}^E$. In our notation we follow \cite{thebook} which
also contains all required background information on oriented matroids. Frequently, we will
use the following definitions and notations.

We order the covectors by the
product order induced by the order $0 < +,-$ and write $\nnull =
(0)_{e \in E}$ for the unique minimal covector in this order.  Following our
conventions, for a covector $X$ we write $(\nnull,X)_\lL$ for the open interval from 
$\nnull$ to $X$ in $\lL$. 
It is well known that the poset of
covectors is graded and hence one can assign each covector $X \in
\lL$ a rank $\rank_\lL(X)$.  The rank $\rank(\lL)$ of $\lL$ is defined
as the maximal rank of one of its covectors.

As usual for a covector $X \in \lL$ we write $X^+$ for 
$\{ e \in E~|~X_e = +\}$ and $X^-$ for $\{ e \in E~|~X_e = -\}$.
In addition, we write $z(X) = \{ e \in E~|~X_e = 0\}$ for its
zero-set. 

Let $A \subseteq E$ be a nonempty set. For $F \in \lL$ we denote by
$F|_A$ the covector $(F_e)_{e \in A}$.  For a set $\kK$ of covectors
over $E$ we then write $\kK|_A$ for the set $\{ F|_A~|~F \in \kK \}$
of covectors over $A$.  For an oriented matroid $\lL$ over $E$ and a
nonempty subset $A \subseteq E$ the set of covectors $\lL|_A$ defines
an oriented matroid called the restriction of  $\lL$ to $A$.  The
contraction of $A$ in $\lL$ is the oriented matroid $\lL / A$ with
covector set $\{ F|_{E\setminus A}~|~F \in \lL, z(F) \subseteq A \}$.
In case $A = \{f\}$ is a singleton we also write $\lL / f$ for $\lL /
A$.

For two covectors $X, Y \in \lL$ their \emph{composition} $X \circ Y$ is
defined by $(X \circ Y)^+ = X^+ \cup (Y^+ \setminus X^-)$ and
$(X \circ Y)^- = X^- \cup (Y^- \setminus X^+)$.

Next we repeat and extend some notation already stated in \ref{sec:intro}.
We write $\tT(\lL)$ for the set of topes of $\lL$ and simply $\tT$ in
case there is no danger of ambiguity. 
For $P,Q \in \tT$ we denote by $\Sep(P,Q)$ the \emph{separator}
of $P$ and $Q$. Then for fixed $R \in \tT$ the set of topes $\tT$
carries a poset structure defined by $P \preceq_R Q$ if and only if
$\Sep(R,P) \subseteq \Sep(R,Q)$ (see \cite[Definition 4.2.9]{thebook}).
We write $\tT_R$ to denote $\tT$ with this poset structure. 
In order to reduce the number of double subscripts we write
write $(P,Q)_R$ for $(P,Q)_{\tT_R}$ 
and $[P,Q]_R$ for $[P,Q]_{\tT_R}$.
%It is well known that for any $R \in \tT$ the poset
%$\tT_R$ is a graded.  
%Hence each $T \in \tT_R$ can be assigned a rank,
%denoted by $\rank_\tT(T)$, such that $\rank_\tT(T_1) = \rank_\tT(T_2)
%-1$ if $T_2$ covers $T_1$ in $\tT_R$.
For $e \in E$ and $P \in \tT$ we say that 
$e$ \emph{does not define a proper face} of $P$
if the only covector $F \in \lL$ with $F \leq P$ and $F_e =
0$ is $F = \nnull$.  

We will frequently encounter the situation where $R,P \in \tT$ 
and $e \in E$ are such that $+ = R_e$ and $- = P_e$. Then after 
reordering and reorientation we can assume the following.

\begin{Situation}
  \label{sit:*} 
     $R =+ \cdots +$ and $P=- \cdots -+\cdots +$ and $e$ is the first 
coordinate of our sign vectors. 
\end{Situation}

In the rest of the paper, we will work in the general situation unless
there is a technical simplification when assuming \ref{sit:*}. In that case
we will explicitly mention the assumption.

Next we state well known facts about the topology of $\lL$ and $\tT_R$. 
 
\begin{Lemma}[Lemma 4.3.11 \cite{thebook}]
  \label{lem:contr}
  Let $P,R \in \tT$ set 
  $$F_R(P) = \{ X \in (\nnull,P)_\lL~|~z(X) \subseteq \Sep(P,R) \} \subseteq \lL.$$
  Then $F_R(P)$ is a filter in $(\nnull,P)_\lL$. If $P \neq \pm R$ then 
  $F_R(P)$ is contractible.
\end{Lemma}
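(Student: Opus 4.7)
The plan is to dispatch the filter claim directly from the covector order and then argue contractibility using the topological representation of $\lL$. The filter property is immediate: if $X \leq Y$ in $(\nnull,P)_\lL$, the covector order forces $X_e \neq 0 \Rightarrow X_e = Y_e$, so $z(Y) \subseteq z(X)$, and the condition $z(X) \subseteq \Sep(P,R)$ at once propagates to $Y$. Hence $F_R(P)$ is upward closed in $(\nnull,P)_\lL$.

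For contractibility under the hypothesis $P \neq \pm R$, set $S := \Sep(P,R)$ and $A := E \setminus S$; both are nonempty. After reorienting to \ref{sit:*}, $R$ is all-positive and $P$ has sign $-$ on $S$ and $+$ on $A$. The elements of $F_R(P)$ are then the proper nonzero faces $X$ of $P$ satisfying $X|_A = P|_A$, i.e.\ those whose closure avoids every pseudosphere indexed by $A$. Under the topological representation, the proper lower interval $(\nnull,P)_\lL$ is the face poset of the boundary pseudosphere $\partial\overline P \cong S^{\rank(\lL)-2}$, and $F_R(P)$ corresponds to the open region $\partial\overline P \setminus \bigcup_{e \in A} S_e$. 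Since both $A$ and $S$ are nonempty, this is a proper nonempty portion of the boundary sphere, and one expects it to be an open topological ball.

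To make this rigorous I would try to produce a deformation retraction of $F_R(P)$ onto a single facet of $P$ supported on some $S_f$ with $f \in S$, for instance by iterated composition with $-R$, and then invoke \ref{cor:subquillen} to remove the covectors outside $F_R(P)$ from an ambient contractible subposet of $\lL_{\leq P}$ after checking the required lower-set contractibility. An alternative is induction on $|A|$ via deletion/contraction at some $f \in A$, comparing $F_R(P)$ in $\lL$ with its counterpart in $\lL\setminus f$ or $\lL/f$ and gluing the pieces through \ref{lem:quillen}.

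The main obstacle is that the naive cone point---the covector with $X|_A = P|_A$ and $X|_S = \nnull$---need not belong to $\lL$ when $\lL$ is non-uniform, so $F_R(P)$ need not possess a global minimum and a direct star-shaped argument is unavailable. The deformation must therefore be built while respecting the pseudosphere arrangement on $\partial\overline P$, choosing an element of $S$ (respectively $A$) to retract across and then bookkeeping the combinatorial effect on zero sets, which is the substance of the original topological argument given as \cite[Lemma~4.3.11]{thebook}.
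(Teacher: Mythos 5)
The paper does not prove this statement: it is quoted directly from \cite[Lemma~4.3.11]{thebook}, so there is no in-paper argument to compare against. Judged as a standalone attempt, your proof of the filter property is correct and complete: $X\leq Y$ in the covector order forces $z(Y)\subseteq z(X)$, which immediately propagates the condition $z(\cdot)\subseteq\Sep(P,R)$ upward.

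For contractibility, however, what you have written is an honest sketch, not a proof, and you say so yourself. You correctly identify the topological picture --- under the topological representation $(\nnull,P)_\lL$ is the face poset of the $(\rank(\lL)-2)$-sphere $\partial\overline P$, and $F_R(P)$ is the filter of faces avoiding the pseudospheres $S_e$ for $e\in A=E\setminus\Sep(P,R)$ --- and you correctly note that $P\neq\pm R$ makes both $A$ and $\Sep(P,R)$ nonempty, so that the region in question is a proper nonempty subset of the boundary sphere. But neither of the two strategies you name (a retraction built by iterated composition with $-R$, or deletion/contraction induction on $|A|$ glued via the Quillen fiber lemma) is carried out, and you yourself pinpoint the real obstruction: in the non-uniform case there is no covector serving as a cone point for $F_R(P)$, so the claim that the region is a ball must be established by an argument respecting the pseudosphere arrangement on $\partial\overline P$. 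That argument is exactly the content of \cite[Lemma~4.3.11]{thebook}, which you end up deferring to rather than reproving. So the contractibility half of your proposal has a genuine gap: it reduces the statement back to the cited reference instead of supplying an independent proof.
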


%\begin{proof}
%  We may assume Situation ($\ast$).
%  Thus $F_R(P)$ consists of those elements in $(\nnull,P)_\lL$ that fit 
%  the
%  pattern
%  \[(\underbrace{\ominus \cdots \ominus}_{\ne -\cdots
%      -}+ \cdots +) \]
%  where $\ominus$ stands for a $-$ or $0$ sign. 
%  The description immediately implies that $F_R(P)$ is a filter.
%
%  Set $J_R(P) = (\nnull,P)_\lL \setminus F_R(P)$. Then
%  $J_R(P)$ consists of those elements from $(\nnull,P)_\lL$
%  that fit the pattern 
%  \[ (\ominus \cdots \ominus\underbrace{\oplus
%    \cdots \oplus}_{\ne +\cdots +}).\]   
%  As a complement of a filter $J_R(P)$ is clearly an order 
%  ideal. For $X \in J_R(P)$ consider the tope $X \circ (-R)$.  
%  On a shortest path from $P$ to $X \circ (-R)$ we first cross a facet
%  of $P$ lying in $J_R(P)$ that lies above $X$. In particular, the 
%  ideal $J_R(P)$ is generated by a set of coatoms of $(\nnull,P)_\lL$.  
%
%  By \cite[Prop.  4.3.1]{thebook} there is a recursive coatom ordering 
%  of $[\nnull,P]_\lL$ such that the maximal elements from $J_R(P)$ come 
%  first.  The assumption $R \neq \pm P$ implies that  $J_R(P) \neq 
%  \emptyset, (\nnull,P)_\lL$.  Therefore, 
%  \cite[Thm. 4.7.28]{thebook} is applicable and shows that 
%  $F_R(P) = (\nnull, P)_\lL \setminus J_R(P)$ is contractible. 
%\end{proof}

For a covector $X$ we set $\cstar(X) := \{ T \in \tT~|~X \leq T\}$.

\begin{Theorem}[Theorem 4.4.2 \cite{thebook}] 
  \label{the:edel}
  Let $\lL$ be an oriented matroid of rank $r$ and $R \in \tT$. 
  For $T_1,T_2 \in \tT_R$ such that $T_1 \preceq_R T_2$
  the order complex of $(T_1,T_2)_R$ 
  is homotopy equivalent to
  \begin{enumerate}[(i)]
    \item a sphere of  dimension $r-\rank_\lL(X)-2$ if 
       $[T_1,T_2]_R$ equals $\cstar(X)$ for some covector $X$,
    \item a point, i.e. it is contractible, otherwise.
  \end{enumerate}
\end{Theorem}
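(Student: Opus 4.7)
The plan is to reduce to the case $T_1 = R$ by reorienting along $\Sep(R, T_1)$, which is a poset automorphism of $\tT_R$, and then to analyze $[R, T_2]_R$ according to whether it is a star $\cstar(X)$.

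In Case (i), when $[R, T_2]_R = \cstar(X)$ for a covector $X$ of rank $k := \rank_\lL(X)$: the subposet $\cstar(X)$ is order-isomorphic to the full tope poset, based at $R|_{z(X)}$, of the localization of $\lL$ at $X$, a rank $r - k$ oriented matroid on ground set $z(X)$. The open interval $(R, T_2)_R$ therefore becomes the proper part of the full tope poset of a rank $r - k$ oriented matroid. I would then invoke (or establish by induction on rank) the known fact that the proper part of the full tope poset of any rank $r'$ oriented matroid is homotopy equivalent to $S^{r' - 2}$: the base case $r' = 1$ yields an empty poset (conventionally $S^{-1}$), and the inductive step applies \ref{lem:quillen} to the map $T \mapsto \max\{F \in \lL : F \le T,\, F \ne T\}$, whose fibers over $F$ deformation retract onto the contractible filters $F_R(T)$ of \ref{lem:contr}. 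This yields a sphere of dimension $r - \rank_\lL(X) - 2$.

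In Case (ii), when $[R, T_2]_R$ is not of the form $\cstar(X)$ for any covector: the strategy is to iteratively prune $(R, T_2)_R$ via \ref{cor:subquillen}, removing at each stage a tope $T'$ for which $(R, T')_R$ is already contractible. The hypothesis that $[R, T_2]_R$ is not a star provides combinatorial slack: one identifies a coordinate $e \in \Sep(R, T_2)$ whose flip keeps a candidate tope inside the interval, and uses this freedom to exhibit a \emph{cone-like} subposet of topes. Iterating the pruning collapses the open interval to a single point.

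The main obstacle will be Case (ii): making the \emph{not a star} hypothesis operational by exhibiting a concrete removable tope at every stage of the pruning. I expect the clean formulation to use the structural criterion that $[R, T_2]_R = \cstar(X)$ if and only if $X$ is the coordinatewise common lower bound in $\lL$ of the topes in the interval, whose failure in Case (ii) produces the required flipping coordinate, combined with a careful induction on $|\Sep(R, T_2)|$ to guarantee that the pruning process always terminates in a point rather than some topologically nontrivial residue.
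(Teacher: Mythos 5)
This theorem is quoted by the paper directly from \cite[Theorem~4.4.2]{thebook} (a result of Edelman, in the form given by Bj\"orner--Edelman--Ziegler); the paper contains no proof of it, so there is no internal argument to compare your attempt against. Assessing the proposal on its own:

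Your reduction to $T_1 = R$ is legitimate --- indeed $[T_1,T_2]_R$ and $[T_1,T_2]_{T_1}$ are the same poset, because $\Sep(T_1,T) = \Sep(R,T)\setminus\Sep(R,T_1)$ for every $T$ in the interval --- and the identification of $\cstar(X)$ with the full tope poset of a rank $r-\rank_\lL(X)$ oriented matroid is correct. However, the map you propose for the inductive step of Case (i), $T \mapsto \max\{F \in \lL : F \le T,\, F \ne T\}$, is not well-defined: a tope of positive rank has several maximal proper faces (its walls), so there is no such unique maximum. The phrase ``whose fibers over $F$ deformation retract onto the contractible filters $F_R(T)$'' is also a type mismatch, since the fibers of a map out of the tope interval consist of topes, while $F_R(T)\subseteq\lL$ is a set of covectors. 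A correct map is the one this paper itself uses in \ref{lem:posetmap} and \ref{pro:homotopy}: send a tope $C$ in the interval to the covector $X$ with $z(X) = \Sep(C,T_2)$, and analyse the fibers via \ref{lem:contr}; but that is not what you wrote.

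Case (ii) is a plan, not a proof, and you acknowledge this yourself. The difficulty you name is not a technicality: many topes $T'\prec_R T_2$ in the interval satisfy $[R,T']_R = \cstar(X')$ for a covector $X'$, so $(R,T')_R$ is a sphere and cannot be removed via \ref{cor:subquillen}. You must therefore simultaneously (a) identify a nonempty set $\SS$ of topes with $(R,T')_R$ contractible to remove, and (b) show that after removal the residue is itself contractible; neither step follows by inspection from the ``not a star'' hypothesis. The criterion you gesture at --- that $[R,T_2]_R = \cstar(X)$ iff the coordinatewise sign-meet of the topes of the interval is a covector --- is not stated precisely (the sign-meet of topes is not in general a covector, and even when the interval is a star one must argue which covector plays the role of $X$), and it does not by itself produce the ``flipping coordinate'' or prove the residue is a point. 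As it stands, Case (ii) is a genuine gap.
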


For $e \in E$ and $R \in \tT$ we write $\tT_{R,e}$ for the poset 
$\{ T \in \tT~|~T_e = -R_e\}\cup \{ \hat{0}\}$ 
with $\hat{0}$ as its least element and the remaining poset 
structure induced from $\tT_R$.
For $P \in \tT_{R,e}$ we write $(\hat{0},P)_{R,e}$ for the interval from
$\hat{0}$ to $P$ in $\tT_{R,e}$. 
We set $$(\hat{0},P)_{R,e}^{\triangle} := 
\{ Q \in (\hat{0},P)_{R,e} ~|~\exists X \in \lL ~:~ [Q,P]_R = \cstar(X) \}.$$
The following is an immediate consequence of \ref{the:edel}.

\begin{Corollary}
  \label{cor:smallposet}
  Let $R \in \tT$, $e \in E$ and $P \in \tT_{R,e}$.
  Then $(\hat{0},P)_{R,e}$ and $(\hat{0},P)_{R,e}^\triangle$ 
  are homotopy equivalent.
\end{Corollary}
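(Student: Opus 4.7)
The plan is to apply the order-theoretic dual of \ref{cor:subquillen} to remove from $\PP := (\hat{0},P)_{R,e}$ the set $\SS := \PP \setminus (\hat{0},P)_{R,e}^\triangle$. The dual statement I need is: if $\PP_{>s}$ is contractible for all $s \in \SS$, then $\PP \setminus \SS$ and $\PP$ are homotopy equivalent. This is immediate from \ref{cor:subquillen} applied to the opposite poset $\PP^{\mathrm{op}}$, which shares its order complex with $\PP$.

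The key identification is that, for each $Q \in \PP$, the upper set $\PP_{>Q}$ coincides with the open interval $(Q,P)_R$ in $\tT_R$. Membership in $\PP$ requires $Q'_e = -R_e$, so the inclusion $\PP_{>Q} \subseteq (Q,P)_R$ is clear. Conversely, $Q \in \PP$ satisfies $Q_e = -R_e$, hence $e \in \Sep(R,Q)$; for any tope $Q'$ with $Q \prec_R Q'$ the inclusion $\Sep(R,Q) \subseteq \Sep(R,Q')$ forces $e \in \Sep(R,Q')$, so $Q'_e = -R_e$. Thus the $e$-sign constraint propagates upward in $\tT_R$, which is the main (mild) technical point of the argument.

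With this identification in place the conclusion is immediate. For $Q \in \SS$, by definition $[Q,P]_R$ is not of the form $\cstar(X)$ for any covector $X \in \lL$, and since $Q \prec_R P$ guarantees the hypotheses of \ref{the:edel}, part (ii) of that theorem yields that $(Q,P)_R = \PP_{>Q}$ is contractible. The dual of \ref{cor:subquillen} then delivers $\PP \simeq \PP \setminus \SS = (\hat{0},P)_{R,e}^\triangle$, as claimed.
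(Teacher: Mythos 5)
Your proof is correct and follows essentially the same route as the paper: identify $((\hat{0},P)_{R,e})_{>Q}$ with $(Q,P)_R$, invoke \ref{the:edel}(ii) to see these are contractible for $Q$ outside $(\hat{0},P)_{R,e}^\triangle$, and conclude via \ref{cor:subquillen}. You are in fact slightly more careful than the paper in two places: you explicitly verify that the $e$-sign constraint propagates upward in $\tT_R$ (so that the upper set in $(\hat{0},P)_{R,e}$ really is the full interval $(Q,P)_R$), and you note that \ref{cor:subquillen} as stated requires contractible lower sets $\PP_{<s}$ whereas the application needs contractible upper sets, so one must pass to the opposite poset — a detail the paper glosses over.
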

\begin{proof}
  Let 
  $$S= \{ Q \in (\hat{0},P)_{R,e} ~|~\nexists X \in \lL ~:~ [Q,P]_R = \cstar(X) \}.$$
  Then $(\hat{0},P)_{R,e}^\triangle = (\hat{0},P)_{R,e} \setminus S$.
  For $Q \in S$ \ref{the:edel} implies that 
  $((\hat{0},P)_{R,e})_{> Q} = (Q,P)_{R}$ is contractible.
  Now the assertion follows from \ref{cor:subquillen}.
\end{proof}
 
\section{Some Oriented Matroid Topology}
\label{sec:omtopology}

At the end of the last section we already recalled some known facts about the topology of posets associated 
to  oriented matroids. This section now contains oriented matroid generalizations of topological
results stated in \cite{DH} and \cite{G} for hyperplane arrangements. 
 
For $P \in \tT_{R,e}$ we set $S = E \setminus \Sep(P,R)$ and 
$S' = \Sep(P,R) \setminus \{e\}$. 
Let $B \in \tT(\lL|_{S'})$ be the unique tope 
from $\tT(\lL|_{S'})$ such that
$B_f = P_f = -R_f$ for all $f \in S'$. 
Let $G \in \tT(\lL|_S)$ be the unique tope from $\tT(\lL|_S)$ such that
$G_f = P_f$ for all $f \in S$. 

We define 
% $$W_{R,e}(P) = \left\{ F \in (\nnull,P)_\lL~\Big|~\begin{array}{cc} F_e = -R_e, &  F|_S = G, \\ F|_{S'} \leq B, & \exists f \in S': F_f = 0 
%                           \end{array} \right\} \subseteq \lL.$$
$$W_{R,e}(P) = \left\{ F \in (\nnull,P)_\lL \mid F_e = -R_e,\,  F|_S = G,\, F|_{S'} \leq B  \right\} \subseteq \lL.$$

\noindent We consider $W_{R,e}(P)$ as a poset with order relation inherited from $\lL$. 
Assuming \ref{sit:*} we have:
% $$W_{R,e}(P) = \left\{ F \in (\nnull,P)_\lL~\Big|~\begin{array}{cc} F_e = -, &  F|_S = \{+\}^S, \\
%                                            F|_{S'} \leq \{-\}^{S'}, & \exists f \in S': F_f = 0 
%                           \end{array} \right\} \subseteq \lL.$$
$$W_{R,e}(P) = \left\{ F \in (\nnull,P)_\lL \mid F_e = -,\,  F|_S = \{+\}^S,\, F|_{S'} \leq \{-\}^{S'}  \right\} \subseteq \lL.$$

\noindent We consider the following map:

$$\alpha_P : \left\{ \begin{array}{ccc} (\hat{0},P)_{R,e}^\triangle & \rightarrow & \lL \\
                                         C & \mapsto & \alpha_P(C) := X, {\genfrac{}{}{0pt}{}{\tiny{\mbox{~for the~}} X \in \lL}{\tiny{\mbox{~such that~}} (C,P)_R = \cstar(X)}} 
                    \end{array}
           \right.
$$

\begin{Lemma} 
   \label{lem:posetmap}
   Let $R \in \tT$ and $e \in E$.
   For $P \in \tT_{R,e}$ and $C \in (\hat{0},P)_{R,e}^\triangle$ 
   the following holds:
   \begin{enumerate}[(i)]     \item $z(\alpha_P(C))=\Sep(C,P)$ and
   $\alpha_P(C) \in W_{R,e}(P)$.
     \item $\alpha_P$ is a poset map from 
        $(\hat{0},P)_{R,e}^\triangle$ to $W_{R,e}(P)$. 
      \item For $F \in W_{R,e}(P)$ the tope $F\circ R$ is the unique
        maximal element in $\alpha_P^{-1} (W_{R,e}(P)_{\leq F})$.
    \end{enumerate}
\end{Lemma}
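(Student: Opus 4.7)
\medskip

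\noindent\textbf{Proof plan.} The underlying observation driving all three parts is that whenever $[C,P]_R = \cstar(X)$, then $X$ is the coordinatewise ``intersection'' of the topes lying in $[C,P]_R$: on coordinates where these topes agree, $X$ takes the common sign; where they disagree, $X$ is $0$. In particular, the topes in $[C,P]_R$ are precisely those obtained from $C$ (equivalently, from $P$) by flipping signs freely on the coordinates of $\Sep(C,P)$, so $z(X) = \Sep(C,P)$. This handles the first half of (i). For the second half, I would check the three conditions defining $W_{R,e}(P)$ coordinate by coordinate. Since $C,P \in \tT_{R,e}$ we have $C_e = P_e = -R_e$, so $e \notin \Sep(C,P) = z(X)$ and thus $X_e = C_e = -R_e$. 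For $f \in S = E \setminus \Sep(P,R)$, $C \preceq_R P$ forces $C_f = R_f = P_f$, hence $f \notin z(X)$ and $X_f = P_f = G_f$. For $f \in S' = \Sep(P,R) \setminus \{e\}$, $X_f \in \{0, P_f\} = \{0, -R_f\}$, which is precisely the condition $X|_{S'} \leq B$. Finally, $z(X)=\Sep(C,P)$ is a nonempty strict subset of $E$ (nonempty since $C \ne P$, strict since $e \notin \Sep(C,P)$), so $X \in (\nnull,P)_\lL$.

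\smallskip

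\noindent For (ii), given $C_1 \preceq_R C_2$ in $(\hat 0,P)^\triangle_{R,e}$, write $X_i=\alpha_P(C_i)$. Then $[C_2,P]_R \subseteq [C_1,P]_R$ translates to $\cstar(X_2) \subseteq \cstar(X_1)$. Since every covector is the coordinatewise intersection of the topes above it, $X_1 = \bigwedge \cstar(X_1) \leq \bigwedge \cstar(X_2) = X_2$, which is what the poset-map property requires.

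\smallskip

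\noindent For (iii), I would argue that $\alpha_P(F\circ R)=F$ by the same interval-characterization used in (i), running it in reverse. A tope $T$ satisfies $F \leq T$ iff $T_f = F_f$ on $E \setminus z(F)$ with $T$ arbitrary on $z(F)$. Using $F \in W_{R,e}(P)$ (which forces $z(F) \subseteq S'$, with $F_f = R_f$ for $f \in S$ and $F_f = -R_f$ for $f \in (S'\setminus z(F))\cup\{e\}$), this translates directly to $\Sep(R,T)$ being $(\Sep(R,P)\setminus z(F)) \cup U$ for some $U \subseteq z(F)$, i.e.\ to $T \in [F\circ R, P]_R$. Hence $\cstar(F) = [F\circ R, P]_R$, which shows $F\circ R \in (\hat 0,P)^\triangle_{R,e}$ and $\alpha_P(F\circ R)=F$. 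In particular $F\circ R$ lies in the preimage $\alpha_P^{-1}(W_{R,e}(P)_{\leq F})$. For any other $C$ in this preimage, $\alpha_P(C) \leq F$ gives $\cstar(\alpha_P(C)) \supseteq \cstar(F) = [F\circ R,P]_R$, i.e.\ $[C,P]_R \supseteq [F\circ R,P]_R$; comparing minima yields $C \preceq_R F\circ R$, proving uniqueness.

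\smallskip

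\noindent The only mildly delicate point is verifying that $F\circ R$ really lies in $(\hat 0,P)_{R,e}$ rather than equal to $P$; this is where one must invoke that $F \in (\nnull,P)_\lL$ strictly, together with $z(F)\cap S=\emptyset$, to conclude $z(F)\ne\emptyset$ and hence $\Sep(F\circ R,P)=z(F)\ne\emptyset$. I expect this bookkeeping together with the sign analysis for $F|_{S'}\leq B$ to be the main place where care is needed; everything else follows formally from the tope-interval-to-covector dictionary.
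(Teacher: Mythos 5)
Your proposal is correct and follows the same basic strategy as the paper: everything is read off from the identity $\cstar(X)=[X\circ R,X\circ(-R)]_R$, which simultaneously pins down $C=X\circ R$, $P=X\circ(-R)$, $z(X)=\Sep(C,P)$, and the monotonicity in (ii)--(iii); your version is simply more detailed, especially in (iii), where the paper argues more tersely via $\alpha_P^{-1}(F')=\{F'\circ R\}$ and the antitone relation between $z(F')$ and $\Sep(R,F'\circ R)$. One small caveat: the parenthetical claim in your opening paragraph that the topes of $[C,P]_R$ are obtained by \emph{freely} flipping signs on $\Sep(C,P)$ is false in general (the number of topes in $\cstar(X)$ is the number of topes of the relevant minor, not $2^{|z(X)|}$); fortunately you never use this, only the weaker and correct statement that $X$ is the coordinatewise meet of the topes above it, which already gives $z(X)=\Sep(C,P)$ because $C$ and $P$ themselves disagree on every element of $\Sep(C,P)$.
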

\begin{proof}
  We assume \ref{sit:*}.
  \begin{enumerate}[(i)]
  \item By definition $[C,P]_R = \cstar(X)=[X \circ R, X
    \circ(-R)]_R$. Hence, $X \le P$ and $X_e=C_e=P_e=-$, implying
    $z(X)=\Sep(C,P)$ and $\alpha_P(C) =X \in W_{R,e}(P)$.
   \item
      Since $C \preceq_R C'$ in $(\hat{0},P)_{R,e}^\triangle$ implies
      $\Sep(C',P) \subseteq \Sep(C,P)$ it follows from (i) that 
      $\alpha_P(C) \leq \alpha_P(C')$. Hence $\alpha_P$ is a map of posets.  
    \item Let $F,F' \in W_{R,e}(P)$, $F' \le F$ and $C=F
      \circ R$. We have $C' \in \alpha_P^{-1}(F')$ if and only if
      $[C',P]_R=[F'\circ R, F'\circ (-R)]_R$. As $z(F) \subseteq
      z(F')$ this implies $\Sep(C',R) \subseteq \Sep(C,R)$ and hence
      the assertion.

  \end{enumerate}
\end{proof}

The following proposition allows us to determine the topology of 
the posets $(\hat{0},P)_{R,e}$ through known results on $W_{R,e}(P)$.

\begin{Proposition}
  \label{pro:homotopy}
  Let $R \in \tT$ and $e \in E$ such that $R_e = +$ and
  $P \in \tT_{R,e}$. Then:
  \begin{enumerate}[(i)]
    \item The order complex of the interval 
        $(\hat{0},P)_{R,e}$ and the order complex of $W_{R,e}(P)$ are 
        homotopy equivalent. 
   \item For $e$ that do not define 
        a proper face of $P$ the order complex of $W_{R,e}(P)$ is 
        contractible if $\pm R \neq P$ and homotopy equivalent
        to a $(\rank(\lL)-2)$-sphere if $-R = P$.
  \end{enumerate}
\end{Proposition}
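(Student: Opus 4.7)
The plan is to handle (i) by applying the Quillen Fiber Lemma to the poset map $\alpha_P$ from \ref{lem:posetmap}, and to handle (ii) by showing that under the hypothesis on $e$ the set $W_{R,e}(P)$ coincides with the filter $F_R(P)$ appearing in \ref{lem:contr}.

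For (i), first use \ref{cor:smallposet} to replace $(\hat{0},P)_{R,e}$ by the homotopy equivalent subposet $(\hat{0},P)_{R,e}^\triangle$. By \ref{lem:posetmap}(ii), $\alpha_P$ is a poset map from $(\hat{0},P)_{R,e}^\triangle$ to $W_{R,e}(P)$, and by \ref{lem:posetmap}(iii), for every $F \in W_{R,e}(P)$ the preimage $\alpha_P^{-1}\bigl(W_{R,e}(P)_{\leq F}\bigr)$ has $F\circ R$ as a maximum. A poset with a maximum is a cone and hence contractible, so \ref{lem:quillen} yields a homotopy equivalence $(\hat{0},P)_{R,e}^\triangle \simeq W_{R,e}(P)$; combined with the first step this proves (i).

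For (ii), I would work in \ref{sit:*} and unpack the three conditions defining $W_{R,e}(P)$ for an arbitrary $F\in (\nnull,P)_\lL$. The inequality $F|_{S'}\leq \{-\}^{S'}$ is automatic from $F\leq P$ since $P_f=-$ for all $f\in S'$. The equality $F|_S=\{+\}^S$ is equivalent to $z(F)\cap S=\emptyset$, i.e.\ $z(F)\subseteq \Sep(P,R)$. Finally, $F_e=-$ reduces (given $F\leq P$) to $F_e\neq 0$, which is forced once one assumes that $e$ does not define a proper face of $P$, because then no nontrivial covector below $P$ can have $F_e=0$. Hence
\[ W_{R,e}(P) \;=\; \{F\in(\nnull,P)_\lL \mid z(F)\subseteq \Sep(P,R)\} \;=\; F_R(P). \]
If $P\neq \pm R$, this poset is contractible by \ref{lem:contr}. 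If $P=-R$, then $\Sep(P,R)=E$, the zero-set condition is vacuous, and $W_{R,e}(P)=(\nnull,-R)_\lL$. Here one appeals to the classical fact that the interval $[\nnull,T]_\lL$ below a tope $T$ is the augmented face lattice of a regular PL $(\rank(\lL){-}1)$-cell --- a consequence of the Folkman--Lawrence topological representation theorem, see \cite[Chapter 4]{thebook} --- so the order complex of its proper part is a barycentric subdivision of the boundary sphere and hence homotopy equivalent to $S^{\rank(\lL)-2}$.

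The main obstacle is the boundary subcase $P=-R$: \ref{lem:contr} explicitly excludes this case, so a separate topological input from the general theory of oriented matroid spheres is required. The rest of (ii) is a routine unfolding of definitions, and the content of (i) is essentially that each fiber of $\alpha_P$ is coned off by the operation $F\mapsto F\circ R$.
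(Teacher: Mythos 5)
Your proposal is correct and follows the paper's proof essentially verbatim: part (i) composes \ref{cor:smallposet} with the Quillen Fiber Lemma applied to $\alpha_P$ using \ref{lem:posetmap}(iii) to cone the fibers via $F \mapsto F\circ R$, and part (ii) reduces to $W_{R,e}(P)=F_R(P)$ and invokes \ref{lem:contr} for $P\neq\pm R$ and sphericity of $(\nnull,-R)_\lL$ for $P=-R$. The only difference is that you spell out the coordinate-by-coordinate verification that $W_{R,e}(P)=F_R(P)$ under the non-proper-face hypothesis (the paper asserts this equality without unpacking it) and you supply an explicit reference for the $(\rank(\lL)-2)$-sphere homotopy type of the face poset of a tope, which the paper also uses implicitly.
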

\begin{proof}
  \begin{enumerate}[(i)]
    \item 
  From \ref{cor:smallposet} it follows that 
  $(\hat{0},P)_{R,e}$ and $(\hat{0},P)_{R,e}^\triangle$ are 
  homotopy equivalent.

  Using \ref{lem:posetmap} (iii) it follows that the order complex of each 
  fiber $\alpha_P^{-1} (W_{R.e}(P)_{\leq F})$ for $F \in W_{R,e}(P)$ is a 
  cone and 
  hence contractible. Now the Quillen Fiber Lemma, \ref{lem:quillen}, 
  shows that 
  the order complexes of $(\hat{0},P)_{R,e}^\triangle$ and $W_{R,e}(P)$ are 
  homotopy equivalent.
    \item 
  Since $e$ does not define a proper face of $P$, we have
  $W_{R,e}(P) = F_R(P)$. 
  If $P = -R$ then $W_{R,e}(P) = (\nnull,P)_\lL$ and hence is
  homotopy equivalent to a $(\rank(\lL)-2)$-sphere.
  If $P \neq \pm -R$ then $W_{R,e}(P) = F_R(P)$ and \ref{lem:contr}
  shows that $W_{R,e}(P)$ is contractible.
  \end{enumerate}
\end{proof}

We summarize the results in the following theorem.

\begin{Theorem}
  \label{thm:topmoebius}
  Let $P \in \tT_{R,e}$ such that $e$ does not define a proper face of $P$.
  Then the interval 
  $(\hat{0},P)_{R,e}$ is  
  contractible if $-R \neq P$ and homotopy equivalent to a 
  $(\rank(\lL)-2)$-sphere if $-R = P$. 
\end{Theorem}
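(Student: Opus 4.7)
The plan is to obtain \ref{thm:topmoebius} as an immediate consequence of \ref{pro:homotopy}. The first step is to reduce to \ref{sit:*} so that $R_e = +$; this reorientation of the element $e$ leaves the poset $\tT_{R,e}$ unchanged, since it is defined via separators and separator sets are unaffected by simultaneous sign flips on $R$ and $P$ at coordinate $e$. The reorientation also preserves the hypothesis that $e$ does not define a proper face of $P$, because that property depends only on which covectors $F \leq P$ satisfy $F_e = 0$, and zeros remain zeros under a sign flip.

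Next I would apply \ref{pro:homotopy}(i), which identifies the order complex of $(\hat{0},P)_{R,e}$ up to homotopy with the order complex of $W_{R,e}(P)$. Then \ref{pro:homotopy}(ii), whose hypothesis is exactly the hypothesis of the theorem, determines the homotopy type of $W_{R,e}(P)$: contractible when $P \neq \pm R$, and homotopy equivalent to a $(\rank(\lL)-2)$-sphere when $P = -R$. Since any $P \in \tT_{R,e}$ different from $\hat{0}$ satisfies $P_e = -R_e$, we automatically have $P \neq R$, so the trichotomy ``$P = R$ vs.\ $P = -R$ vs.\ $P \neq \pm R$'' collapses to the dichotomy ``$P = -R$ vs.\ $P \neq -R$'' stated in \ref{thm:topmoebius}.

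At the level of this theorem there is essentially no obstacle beyond this bookkeeping; the substantive work---the Quillen fiber argument underlying (i) and the appeal to \ref{lem:contr} underlying (ii)---has already been carried out inside \ref{pro:homotopy}. The theorem is a clean repackaging of those two parts into the form most convenient for the M\"obius-number applications in \ref{sec:varchenko}, where by \ref{pr:moebius} contractibility gives $\mu((\hat{0},P)_{R,e}) = 0$ and the sphere case yields $\mu((\hat{0},P)_{R,e}) = (-1)^{\rank(\lL)}$.
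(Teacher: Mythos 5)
Your proof is correct and follows exactly the paper's own route: the paper simply states that the theorem is an immediate consequence of Proposition~\ref{pro:homotopy}(i) and (ii), which is precisely your argument. The extra bookkeeping you supply (reorientation to \ref{sit:*}, and observing that $P \in \tT_{R,e}$ forces $P \neq R$ so that ``$P \neq \pm R$'' reduces to ``$P \neq -R$'') is accurate and merely makes explicit what the paper leaves implicit.
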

\begin{proof}
  The result is an immediate consequence of \ref{pro:homotopy}(i)
  and (ii).
\end{proof}

The well known connection of homotopy type and
M\"obius-number from \ref{pr:moebius} yields.

\begin{Corollary}
  \label{co:moebius}
  Let $P \in \tT_{R,e}$ such that $e$ does not define a proper face of $P$.
  Then the M\"obius 
  number 
  $\mu((\hat{0},P)_{R,e})$ is $0$ if $-R \neq P$ and   
  $(-1)^{\rank(\lL)}$ if $-R = P$. 
\end{Corollary}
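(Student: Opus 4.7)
The plan is to invoke the link between poset topology and Möbius numbers established in \ref{pr:moebius} and apply it directly to the homotopy identification just obtained in \ref{thm:topmoebius}. Concretely, the open interval $(\hat{0},P)_{R,e}$ sits inside the bounded poset obtained by adjoining $\hat 0$ and $P$; its Möbius number is the reduced Euler characteristic of the order complex of $(\hat{0},P)_{R,e}$, and this reduced Euler characteristic is a homotopy invariant.

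The proof then splits into the two cases dictated by \ref{thm:topmoebius}. If $-R\neq P$, then the order complex of $(\hat 0,P)_{R,e}$ is contractible, so by \ref{pr:moebius} its Möbius number vanishes, giving $\mu((\hat 0,P)_{R,e})=0$. If $-R=P$, then \ref{thm:topmoebius} yields that the order complex is homotopy equivalent to a sphere of dimension $\rank(\lL)-2$. Applying \ref{pr:moebius} once more reduces the computation to the (known) Möbius number of a poset whose order complex is a $(\rank(\lL)-2)$-sphere; since the reduced Euler characteristic of an $n$-sphere is $(-1)^n$, this yields $\mu((\hat 0,P)_{R,e}) = (-1)^{\rank(\lL)-2}=(-1)^{\rank(\lL)}$, as claimed.

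There is no real obstacle here: the two topological inputs have already been packaged in \ref{thm:topmoebius}, and \ref{pr:moebius} is exactly the bridge needed to translate the homotopy types into numerical invariants. The only minor point worth verifying explicitly is the sign in the spherical case, namely that $(-1)^{\rank(\lL)-2}=(-1)^{\rank(\lL)}$, which makes the parity of the exponent work out irrespective of the offset by $2$ in the dimension of the sphere.
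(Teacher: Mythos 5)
Your argument is correct and matches the paper's reasoning exactly: the corollary is derived by applying \ref{pr:moebius} to the homotopy types established in \ref{thm:topmoebius}, with the only computation being that the reduced Euler characteristic of a $(\rank(\lL)-2)$-sphere is $(-1)^{\rank(\lL)-2}=(-1)^{\rank(\lL)}$.
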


The next result overlaps with \ref{thm:topmoebius} but also covers some of the cases where
$e$ defines a face of $P$. Note that $e$ defines a proper face of $R$ if and only if $e$ defines
a proper face of $-R$. Hence the interval $(\hat{0},-R)_{R,e}$ is covered by \ref{thm:topmoebius} if $e$ does not
define a proper face of $R$ and it is covered by \ref{thm:moebius}
otherwise. 
%The following lemma can for example be found in \cite[Exercise 4.10]{thebook} and is usually
%referred to as the gate property. 

%\begin{Lemma}[Gate property]
%  \label{lem:gate}
%  Let $F \in \lL$ be a covector. Then for all $P \in \tT$ we have   $Q = F \circ P \in \cstar(F)$ is the unique tope in $\cstar(F)$ such that   for all $O \in \cstar(F)$ 
%  \begin{eqnarray}
%    \label{eq:union} 
%    \Sep(P,O) & = & \Sep(P,Q) \cup \Sep(Q,O) \\
%    \label{eq:inter} 
%    \emptyset & = &  \Sep(P,Q) \cap \Sep(Q,O)
%  \end{eqnarray} 
%\end{Lemma}

\medskip

\begin{Theorem}
  \label{thm:moebius}
  Let $R \in \tT$ and let $e \in E$ define a proper face of $R$. 
  Let $F \in \lL$ 
  be the maximal covector such that $F \leq R$ and $F_e = 0$
  and choose 
  $P_{top} \in \tT_{R,e} \setminus \cstar(F)$. Then
  $(\hat{0},P_{top})_{R,e}$ is contractible. 
  In particular, $\mu((\hat{0},P_{top})_{R,e}) = 0$.
\end{Theorem}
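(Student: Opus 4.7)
The plan is to define an order-preserving idempotent on $(\hat{0}, P_{top})_{R,e}$ that collapses it to a subposet with a maximum element, and then exhibit the maximum.

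Working under \ref{sit:*}, I would set $\phi(T) := F \circ T$. The routine checks are that $F \circ T$ is a tope (composition with the zero-free $T$ introduces no new zeros), that $(F \circ T)_e = T_e = -R_e$ because $F_e = 0$, and that $\Sep(R, F \circ T) = z(F) \cap \Sep(R, T)$. From this last identity, $\phi$ is order-preserving with $\phi(T) \preceq_R T$ and $\phi \circ \phi = \phi$ (using $F \circ F = F$), and its fixed point set is precisely $A := (\hat{0}, P_{top})_{R,e} \cap \cstar(F)$. The delicate point is verifying $\phi(T) \prec_R P_{top}$ strictly, and this is where the hypothesis enters: since $P_{top} \notin \cstar(F)$ one can pick $g \in \Sep(R, P_{top}) \setminus z(F)$, and then $g \notin \Sep(R, \phi(T))$ gives $\Sep(R, \phi(T)) \subseteq z(F) \cap \Sep(R, P_{top}) \subsetneq \Sep(R, P_{top})$, so $\phi(T) \in (\hat{0}, P_{top})_{R,e}$. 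The standard poset-topology fact that two comparable order-preserving maps $P \to P$ are homotopic then yields $(\hat{0}, P_{top})_{R,e} \simeq A$.

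It remains to show $A$ is contractible, and for this I would exhibit $P^* := F \circ P_{top}$ as its maximum. The same routine checks give $P^* \in A$, where the strictness $P^* \prec_R P_{top}$ again uses $g$. For any $T \in A$, $T \geq F$ forces $\Sep(R, T) \subseteq z(F)$ and $T \prec_R P_{top}$ forces $\Sep(R, T) \subseteq \Sep(R, P_{top})$; intersecting these gives $\Sep(R, T) \subseteq z(F) \cap \Sep(R, P_{top}) = \Sep(R, P^*)$, i.e., $T \preceq_R P^*$. With a maximum, $A$ is a cone and hence contractible, so $(\hat{0}, P_{top})_{R,e}$ is contractible and the M\"obius statement follows from \ref{pr:moebius}.

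The main obstacle is to arrange that the strictness $\Sep(R, \phi(T)) \subsetneq \Sep(R, P_{top})$ and its analog for $P^*$ both go through — both use the hypothesis $P_{top} \notin \cstar(F)$ in the same way. Without this hypothesis, $F \circ P_{top}$ could equal $P_{top}$, $\phi$ would fix $P_{top}$, and neither the homotopy equivalence nor the cone point would be available.
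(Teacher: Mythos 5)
Your proof is correct and takes essentially the same route as the paper: both define the map $T \mapsto F \circ T$ on $(\hat{0},P_{top})_{R,e}$, show it is a descending order-preserving idempotent, deduce homotopy equivalence with the image $\cstar(F)\cap(\hat{0},P_{top})_{R,e}$, and exhibit $F\circ P_{top}$ as a maximum of that image. One small clarification: for $T$ already in the \emph{open} interval, the strictness $\phi(T)\prec_R P_{top}$ follows automatically from $\phi(T)\preceq_R T\prec_R P_{top}$, so the hypothesis $P_{top}\notin\cstar(F)$ is genuinely needed only to ensure $F\circ P_{top}\neq P_{top}$, i.e., that the cone point lies strictly below $P_{top}$.
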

\begin{proof}
  Let $P \in (\hat{0},P_{top})_{R,e}$. Then by the gate property
  \cite[Exercise 4.10]{thebook} the tope
  $Q = F \circ P \in \cstar(F)$ is the unique tope in $\cstar(F)$ 
  such that for all $O \in \cstar(F)$ we have
  \begin{eqnarray*}
    \Sep(P,O) & = & \Sep(P,Q) \cup \Sep(Q,O) \\
    \emptyset & = &  \Sep(P,Q) \cap \Sep(Q,O).
  \end{eqnarray*}
  Since $F_e = 0$ it also follows that $Q_e = -$.
  Since $F \le R$, clearly $\Sep(R,Q) =\Sep(R,F \circ P)\subseteq
  \Sep(R,P)$ and hence $Q \preceq_R P$.  This shows $Q \in
  (\hat{0},P_{top})_{R,e}$.  Now let $Q \leq_R Q'$. Then $F \circ Q
  \preceq_R F \circ Q'$. Since $F \leq R$ it follows that $F \circ Q
  \preceq_R Q$. Obviously $F \circ (F \circ Q) = F \circ Q$.

  This shows that the map : $\circ_F : (\hat{0},P_{top})_{R,e}
  \rightarrow (\hat{0},P_{top})_{R,e}$ is a closure operator.
  And hence $(\hat{0},P_{top})_{R,e}$ is homotopy equivalent to its
  image (see e.g, \cite[Corollary 10.12]{Bj}).   

  Since $P_{top} \not\in \cstar(F)$ and $F \circ P_{top}  \in \cstar(F)
  \cap (\hat{0},P_{top})_{R,e}$, 
  it also follows that $F \circ Q \preceq_R F \circ P_{top}$ for all 
  $Q \in (\hat{0},P_{top})_{R,e}$. Hence the image of $\circ_F$ has
  a maximal element and hence is contractible.  
\end{proof}

\section{Supertopes}
\label{sec:supertope}

In this section we identify supertopes that are relevant for our
purposes and provide the proof of \ref{thm:supertope}.
We also deduce \ref{cor:crucial}, which is crucial 
for the derivation of \ref{thm:varchenko} from \ref{thm:supertope}. 
Throughout this section we assume \ref{sit:*}.

In order to apply the Quillen fiber in the proof
of \ref{thm:varchenko} we need the following lemma.

\begin{Lemma}
  \label{lem:fiberisst}
  Let $S^+,S^- \subseteq E$ such that $S^+\cap S^- = \emptyset, \,
  S^+\cup S^- \ne E$ and $f \in E \setminus (S^+\cup S^-)$. Let
  $\tT^f$ denote the set of topes of $\lL \setminus f$ and
  $\tT^f_{R\setminus f}$ the corresponding tope poset with base
  polytope $R\setminus \{f\}$. Consider the poset map
  $\pi^{f}:\tT(S^+,S^-) \to \tT^{f}(S^+,S^-)$ given by restriction.
  Let $Q \in \tT^{f}(S^+,S^-)$. Then
  \[(\pi^{f})^{-1}(\tT^f_{\preceq Q})=\tT(Q^+,S^-).\]
\end{Lemma}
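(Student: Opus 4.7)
The plan is to unpack both sides of the claimed equality in terms of the simple combinatorics of sign vectors, exploiting the standing assumption~\ref{sit:*} that $R=+\cdots+$. Under this normalization the relation $T \preceq_R T'$ becomes $T^- \subseteq T'^{-}$, and in particular $\Sep(R,T) = T^-$ for every tope $T$. Since $Q$ is a tope of $\lL\setminus f$, the ground set $E\setminus\{f\}$ is partitioned as $Q^+ \sqcup Q^-$, and by hypothesis $S^+\subseteq Q^+$, $S^-\subseteq Q^-$. With these reductions, the condition $\pi^{f}(T)\preceq_{R\setminus f} Q$ is just $T^-\setminus\{f\}\subseteq Q^-$, and the goal is to show that together with $T\in\tT(S^+,S^-)$ this is equivalent to $T\in\tT(Q^+,S^-)$.

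For the inclusion $(\pi^{f})^{-1}(\tT^f_{\preceq Q})\subseteq \tT(Q^+,S^-)$, I would take $T\in\tT(S^+,S^-)$ with $T^-\setminus\{f\}\subseteq Q^-$. The negative-part condition on $S^-$ is inherited, so only $T_e=+$ for $e\in Q^+$ needs to be checked; but any $e\in Q^+$ lies in $E\setminus\{f\}$ and is not in $Q^-$, so $T^-\setminus\{f\}\subseteq Q^-$ forces $T_e\ne -$, hence $T_e=+$. For the reverse inclusion, let $T\in\tT(Q^+,S^-)$; then $T\in\tT(S^+,S^-)$ because $S^+\subseteq Q^+$, and every $e\in T^-\setminus\{f\}$ lies outside $Q^+$, hence in $Q^-$, which is the poset condition.

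I do not anticipate a real obstacle here: the argument is essentially a translation of the definitions, made transparent by the normalization $R=+\cdots+$. The only subtlety to flag is that no constraint is placed on the coordinate $T_f$ on either side of the equality (because $f\notin S^+\cup S^-$ and $f$ is absent from the ground set of $Q$), which is why the set-difference ``$\setminus\{f\}$'' appears in the translated poset condition and why restricting and then taking preimage does not collapse fibers in a non-obvious way.
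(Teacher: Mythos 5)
Your proof is correct and is essentially the same as the paper's: both arguments simply translate the poset conditions into sign-vector containments $T^-\setminus\{f\}\subseteq Q^-$ and $Q^+\subseteq T^+$ using the normalization $R=+\cdots+$ from \ref{sit:*} and the partition $E\setminus\{f\}=Q^+\sqcup Q^-$. The only difference is cosmetic: you argue directly with $T$ in the domain $\tT(S^+,S^-)$, while the paper splits into a $\tilde Q$-side and a $\hat Q$-side, but the sign-vector reasoning is identical.
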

\begin{proof}
  Let $\tilde Q \in \tT^f_{\preceq Q} \cap \tT^{f}(S^+,S^-)$.  As
  $R\setminus f$ is all positive, we must have $S^-\subseteq \tilde
  Q^- \subseteq Q^-$ and hence $Q^+ \subseteq \tilde Q^+$ implying
  $(\pi^{f})^{-1}(\tilde Q) \subseteq \tT(Q^+,S^-)$. On the other
  hand, if $\hat Q \in \tT(Q^+,S^-)$, then
  $S^-\subseteq \hat Q^- \subseteq Q^-\cup\{f\},\, S^+\subseteq Q^+
  \subseteq \hat Q^+\cup \{f\}$. As $f \not\in S^+ \cup S^-$ we have that  $\pi^f(\hat Q)$ is well defined and
  $\pi^f(\hat Q) \preceq Q$.
\end{proof}

We need another preparatory lemma for  the proof of \ref{thm:supertope}.

\begin{Lemma}\label{lem:supertope}
  Let $\lL$ be an oriented matroid on $E$ and $\tT$ be
  the set of its topes. Let $E= S^+ \dot \cup S^- \dot \cup S^\ast$ be
  a partition of the ground set into nonempty sets $S^+,S^-$ and $S^*$. 
  If for all $f \in S^\ast$ there exists
  $T^f \in \tT$ such that
  \[T^f_g=\left\{ 
    \begin{array}[h]{rcl}
      + &\text{ if } & g \in S^+\\ 
      - &\text{ if } & g \in S^-\\ 
      - &\text{ if } & g \in S^\ast  \setminus\{f\}\\ 
      + &\text{ if } & g =f, 
    \end{array}\right.\]
  then either there exists a tope $T^{\max{}}  \in \tT$ satisfying
  \[T^{\max{}}_g=\left\{
  \begin{array}[h]{rcl}
    + &\text{ if } & g \in S^+\\ 
    - &\text{ if } & g \in S^-\\ 
    - &\text{ if } & g \in S^\ast,  \end{array}\right.\]
  or there exists a tope $T^{\min{}} \in \tT$ satisfying
  \[T^{\min{}}_g=\left\{
  \begin{array}[h]{rcl}
    + &\text{ if } & g \in S^+\\ 
    - &\text{ if } & g \in S^-\\ 
    + &\text{ if } & g \in S^\ast  \end{array}\right.\]
  or a covector $Y \in \lL$ satisfying
  \[Y_g=\left\{
  \begin{array}[h]{rcl}
    + &\text{ if } & g \in S^+\\ 
    - &\text{ if } & g \in S^-\\ 
    0 &\text{ if } & g \in S^0\\ 
    - &\text{ if } & g \in S^\ast \setminus S^0  \end{array}\right.\]
  for some set $\emptyset \ne S^0\subseteq S^\ast$.

  Hence, for a fixed $R \in \tT$ the subposet $\tT(S^+,S^-)$ of
  $\tT_R$ either has a unique maximal element or it has a unique
  minimal element. In particular, it is contractible.
\end{Lemma}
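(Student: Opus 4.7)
The plan is to prove the lemma by induction on $|S^*|$, combined with the strong elimination axiom for covectors. The base case $|S^*|=1$ is immediate: the unique witness $T^f$ has $+$ on $S^+\cup S^*$ and $-$ on $S^-$, so $T^f=T^{\min{}}$.

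For the inductive step with $|S^*|\ge 2$, fix any $h\in S^*$ and apply the inductive hypothesis to the reduced partition $(S^+,\,S^-\cup\{h\},\,S^*\setminus\{h\})$. The topes $T^f$ with $f\neq h$ remain valid witnesses for this smaller problem since $T^f_h=-$. The induction yields one of three outcomes. If it returns a max-tope, that sign vector is literally $T^{\max{}}$ in the original partition, and case (a) of the lemma holds. If it returns a covector $\tilde Y$ with zero-set $\tilde S^0\subseteq S^*\setminus\{h\}$, then re-interpreting $\tilde Y$ in the original partition absorbs the forced $-$-entry at $h$ into the $-$-block on $S^*\setminus\tilde S^0$, so $\tilde Y$ is exactly a $Y$-covector of the lemma with $S^0=\tilde S^0$, and case (c) holds.

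The remaining outcome is a min-tope $T^{\flat}$ with $+$ on $S^+\cup(S^*\setminus\{h\})$ and $-$ on $S^-\cup\{h\}$. I combine $T^{\flat}$ with the original witness $T^h$: they satisfy $\Sep(T^{\flat},T^h)=S^*$ and agree on $S^+\cup S^-$, so strong elimination at $h$ produces a covector $Z\in\lL$ with $Z_h=0$, $Z|_{S^+}=+$, and $Z|_{S^-}=-$. If $Z|_{S^*\setminus\{h\}}$ is uniformly $+$, then $Z\circ R=T^{\min{}}$ (case (b)); if uniformly $-$, then $Z$ itself is a $Y$-covector with $S^0=\{h\}$ (case (c)).

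The main obstacle is the mixed-sign case for $Z|_{S^*\setminus\{h\}}$. My plan to resolve it is to invoke the lattice property of the big face lattice: the meet $T^{\flat}\wedge T^h$, if different from $\hat{0}$, must equal $+^{S^+}-^{S^-}0^{S^*}$ because $T^{\flat}$ and $T^h$ disagree on all of $S^*$, and composing this meet with $R$ then yields $T^{\min{}}$. If the meet is $\hat{0}$, I symmetrically apply the inductive hypothesis to the partition $(S^+\cup\{h\},\,S^-,\,S^*\setminus\{h\})$, using witnesses $\hat T^f$ obtained as the intermediate topes on length-$2$ shortest paths between $T^f$ and $T^h$ in the tope graph; such intermediates exist as topes of $\lL$ by connectivity of the tope graph, and because their only two possibilities are $T^{\max{}}$ (excluded by assumption) and the desired $\hat T^f$, the latter must be present. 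The second induction then either returns $T^{\min{}}$ directly, or returns a covector that combines with $Z$ in one final elimination step to yield the required $T^{\min{}}$ or $Y$.
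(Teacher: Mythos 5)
Your inductive strategy differs genuinely from the paper's. For $|S^\ast|\ge 3$ the paper fixes $g\in S^\ast$, eliminates $g$ between $T^g$ and $T^f$, and either obtains $T^{\max}$ outright (when $X^f_f \in\{0,-\}$) or passes to the \emph{contraction} $\lL/g$, where the $X^f$ are honest topes and the inductive hypothesis applies cleanly. You instead stay inside $\lL$ and re-partition, moving $h$ from $S^\ast$ into $S^-$ and then into $S^+$. The first two outcomes of your first induction (max-tope and $Y$-covector) do transfer correctly, so the structure is promising up to the point where the first induction returns the tope $T^\flat$.

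That min-tope branch, however, is not rigorously closed, and the difficulty you face is exactly the one the paper's contraction avoids. Two concrete gaps. First, the claim that the meet $T^\flat\wedge T^h$ in the big face lattice, if different from $\hat 0$, equals $+^{S^+}-^{S^-}0^{S^\ast}$ is unjustified: the zero-set of the meet of two topes is the smallest flat containing $\Sep(T^\flat,T^h)=S^\ast$ that actually occurs as the zero-set of a common face, and this flat can properly contain $S^\ast$; then the meet has zeros in $S^+\cup S^-$ and is not of the required $Y$-shape (which needs $z(Y)\subseteq S^\ast$). Moreover, even if the meet were as claimed, ``composing with $R$'' does not produce $T^{\min}$ for a general base tope $R$; you would need $R$ to be all-positive on $S^\ast$, which is not given. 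Second, the ``one final elimination step'' combining $Z$ and $\hat Y$ is not specified and does not obviously work: $Z_h=0$ and $\hat Y_h=+$, so $h\notin\Sep(Z,\hat Y)$ and one cannot eliminate on $h$; eliminating on some other element of $\Sep(Z,\hat Y)$ gives a covector whose behaviour on $h$ and on the rest of $S^\ast$ is uncontrolled, so it is not clear you land on a $Y$ (which must have $Y_h\in\{0,-\}$ and $z(Y)\subseteq S^\ast$), on $T^{\min}$, or on $T^{\max}$. (You should also account for $Z$ having zeros on $S^\ast\setminus\{h\}$, which is neither ``uniformly $+$'' nor ``uniformly $-$''; that subcase is in fact harmless, but it should be named.) As written, the mixed-sign case is a sketch with a genuine gap; this is precisely where the paper's passage to $\lL/g$ does the real work.
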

\begin{proof}
  We proceed by induction on $|S^\ast|$. If $|S^\ast|=1$ the assertion
  is trivial. If $S^\ast=\{f,g\}$, then, either $f$ and $g$ are
  antiparallel and we find a $Y$ as required, or on a shortest path
  from $T^f$ to $T$ we must pass through $T^{\max{}}$ or 
  $T^{\min{}}$.  Assume $|S^\ast|\ge 3$ and let $g \in S^\ast$.
  If there exists some $f \in S^\ast \setminus \{g\}$ such that
  eliminating $g$ between $T^g$ and $T^f$ yields a covector $X^f$ such
  that $X^f_{f}\in \{0,-\}$, then $X \circ T^h$ for $ h \in S^\ast
  \setminus \{f,g\}$ is an element $T^{\max{}}$. Hence we may
  assume that for all $f \in S^\ast \setminus\{g\}$ we find $X$
  satisfying
  \[X^f_h=\left\{
    \begin{array}[h]{rcl}
      + &\text{ if } & h \in S^+\\
      - &\text{ if } & h \in S^-\\ 
      - &\text{ if } & h \in S^\ast  \setminus\{f,g\}\\ 
      + &\text{ if } & h =f\\ 
      0 &\text{ if } & h =g. 
    \end{array}\right.\]
  Then the image of $X^f$ in $\lL/g$ satisfies the assumptions of the lemma in the oriented
  matroid $\lL/g$. By induction we find either an
  appropriate $\tilde Y \in \lL /g$ which clearly yields a $Y$ as
  required in $\lL$, or we find an element $X^{\max{}} \in \lL$ such that  
  \[X^{\max{}}_h=\left\{
  \begin{array}[h]{rcl}
    + &\text{ if } & h \in S^+\\
    - &\text{ if } & h \in S^-\\ 
    - &\text{ if } & h \in S^\ast \setminus\{g\}\\ 
    0 &\text{ if } & h =g 
  \end{array}\right.\] 
  and $X^{\max{}}\circ T^f$ is as required for $f \in S^\ast \setminus
  g$ and similarly $X^{\min{}}\circ T^g$ in the remaining case.

  The assertion about the topology of $\tT(S^+,S^-)$ now follows from the fact
that either  $T^{\max{}}$, $T^{\min{}}$ of $Y \circ (-R)$ has a
  unique minimal or a unique maximal element.

\end{proof}

Now we are in position to prove \ref{thm:supertope}.

\begin{proof}[Proof of \ref{thm:supertope}]
  We proceed by induction on $|E \setminus(S^+ \cup S^-)|$. If $S^+
  \cup S^-=E$, then $\tT(S^+,S^-)$ is a singleton and thus
  contractible. If $S^+ \cup S^-\ne E$, then $S^\ast:=E \setminus (S^+
  \cup S^-)\ne \emptyset$. If for all $f \in S^\ast$ there exists
  $T^f$ as in \ref{lem:supertope}, $\tT(S^+,S^-)$ is contractible by
  \ref{lem:supertope}. Hence we may assume that there exists $f \in
  S^\ast$ such that $T^f \not \in \tT$.  Let $\tT^{f}_{R \setminus
    \{f\}}$ denote the tope poset in the oriented matroid $\lL
  \setminus f$ with base tope $R \setminus \{f\}$.  By inductive
  assumption its subposet $\tT^{f}(S^+,S^-)$ is contractible. Consider
  the poset map $\pi^{f}:\tT(S^+,S^-) \to \tT^{f}(S^+,S^-)$ given by
  restriction. Let $Q \in \tT^{f}(S^+,S^-)$. By \ref{lem:fiberisst} 
  \[(\pi^{f})^{-1}(\tT^f_{\preceq Q})=\tT(Q^+,S^-).\] Clearly $S^+
  \subseteq Q^+$. If $S^+ \subsetneq Q^+$, then
  $(\pi^{f})^{-1}(Q_{\preceq})$ is contractible by inductive
  assumption. Consider the case that $S^+=Q^+$. By the choice of $f$
  the preimage $(\pi^{f})^{-1}(Q)$ is a singleton $\{Z\}$ with $Z=-$. Hence,
  this is the unique maximal element in $(\pi^{f})^{-1}(Q_{\preceq})$ and
  that fiber is also contractible.  Hence by \ref{lem:quillen}
  $\tT(S^+,S^-)$ and $\tT^f(S^+,S^-)$ are homotopy equivalent and the
  claim follows.
\end{proof}

%We write $\tT(\emptyset,\{e\})$ for the topes in the supertope $\tT(\emptyset,\{e\})$
%and $\tT(\{e\},\emptyset)$ for the topes in the supertope $\tT(\{e\},\emptyset)$. 

\begin{Corollary}
  \label{cor:crucial}
  Let $R \in \tT$ be the base tope of the poset $\tT_R$. Let 
  $e \not\in S\subseteq E$. Then 
  $$\sum_{\genfrac{}{}{0pt}{}{Q \in \tT(\emptyset,\{e\})}{ S= \Sep(P,Q) \cap \Sep(Q,R)}}  \mu((\hat{0},Q)_{R,e}) = \left\{ \begin{array}{ccc} -1 & \mbox{~if~} & S = \emptyset \\ 
                                       0 & \mbox{~if~} & S \neq \emptyset 
                    \end{array} \right. .$$
\end{Corollary}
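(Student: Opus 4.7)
I would prove the corollary by Möbius inversion over the Boolean lattice of subsets of $S$. For each $T \subseteq E \setminus \{e\}$ set
\[
g(T) := \sum_{\substack{Q \in \tT(\emptyset,\{e\}) \\ \Sep(P,Q) \cap \Sep(Q,R) \subseteq T}} \mu((\hat 0,Q)_{R,e}),
\]
and let $f(S)$ denote the left-hand side of the corollary. Then $g(T) = \sum_{T' \subseteq T} f(T')$, so Möbius inversion yields $f(S) = \sum_{T \subseteq S} (-1)^{|S\setminus T|} g(T)$. Because $\sum_{T \subseteq S}(-1)^{|S \setminus T|} = [S = \emptyset]$, the corollary reduces to the single claim $g(T) = -1$ for every $T \subseteq S$.

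The next step is to recognize the index set of $g(T)$ as a supertope. Working in Situation~\ref{sit:*} so that $R_e = +$ and $P_e = -$, an element $f$ lies in $\Sep(P,Q) \cap \Sep(Q,R)$ iff $P_f = R_f \neq 0$ and $Q_f = -P_f$. Hence $\Sep(P,Q) \cap \Sep(Q,R) \subseteq T$ is equivalent to $Q_f = R_f$ for every $f \in (E \setminus T) \setminus \Sep(P,R)$, and combined with $Q_e = -$ the index set is exactly $\tT(S^+_T, S^-_T)$ where
\[
S^+_T := \{f \notin T : P_f = R_f = +\}, \qquad S^-_T := \{f \notin T : P_f = R_f = -\} \cup \{e\}.
\]
The tope $P$ itself satisfies these sign conditions, so this supertope is nonempty, and \ref{thm:supertope} guarantees that its order complex is contractible.

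To convert this into $g(T) = -1$, I would set $\QQ^\ast := \tT(S^+_T, S^-_T) \cup \{\hat 0\}$, viewed as a subposet of $\tT_{R,e}$, and verify that $\QQ^\ast$ is an order ideal there: for any tope $Q' \preceq_R Q \in \tT(S^+_T, S^-_T)$, $\Sep(R,Q') \subseteq \Sep(R,Q)$ forces $Q'_f = R_f$ at every position where $Q_f = R_f$, so $Q' \in \tT(S^+_T, S^-_T)$. Consequently $\mu_{\QQ^\ast}(\hat 0, Q) = \mu((\hat 0, Q)_{R,e})$ for each $Q \in \QQ^\ast$ by Hall's theorem. Adjoining a formal top $\hat 1$, the contractibility of $\tT(S^+_T, S^-_T)$ together with \ref{pr:moebius} gives $\mu_{\QQ^\ast \cup \{\hat 1\}}(\hat 0, \hat 1) = 0$, and the defining Möbius recursion then produces $\sum_{Q \in \QQ^\ast} \mu_{\QQ^\ast}(\hat 0, Q) = 0$. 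Subtracting the $Q = \hat 0$ contribution gives $g(T) = -1$.

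The main obstacle is the bookkeeping that translates the separator conditions into sign patterns of a supertope, together with verifying that $\QQ^\ast$ is an order ideal so that Möbius functions restrict correctly. Both are routine once laid out, and all nontrivial topology is supplied by \ref{thm:supertope}.
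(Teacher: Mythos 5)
Your proposal is correct and matches the strategy of the paper's own proof: both reduce the statement to showing that, for every $T\subseteq S$, the sum of M\"obius values over $\{Q\in\tT(\emptyset,\{e\}):\Sep(P,Q)\cap\Sep(Q,R)\subseteq T\}$ equals $-1$, by identifying that set (with $\hat{0}$ adjoined) as an order ideal in $\tT_{R,e}$ whose proper part is a nonempty supertope containing $P$, and then invoking the contractibility supplied by \ref{thm:supertope}. Where the paper organizes the reduction as an induction on $\#S$ with a separate base case $S=\emptyset$, you run M\"obius inversion over the Boolean lattice $2^S$; these are the same computation, and yours has the small advantage of treating all $T$ uniformly. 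Two details of your write-up are worth keeping: you verify explicitly that $\QQ^\ast$ is an order ideal of $\tT_{R,e}$, which is what makes $\mu_{\QQ^\ast}(\hat{0},Q)$ coincide with $\mu((\hat{0},Q)_{R,e})$ and which the paper leaves tacit; and your sign sets $S^\pm_T$ correctly carry the dependence on $P$, whereas the paper's $T^\pm$ as printed depend only on $R$ and, in \ref{sit:*}, would describe $\tT(E\setminus(S\cup\{e\}),\{e\})$ rather than the actual index set $\tT(P^+\setminus S,\{e\})$ — these differ whenever $\Sep(P,R)\neq\{e\}$, so your identification is the one that makes \eqref{eq:supertope} literally an equality of index sets.
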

\begin{proof}
  We prove the assertion by induction on $\# S$.
 
  If $S = \emptyset$ then 
  \begin{eqnarray*} 
     \sum_{\genfrac{}{}{0pt}{}{Q \in \tT(\emptyset,\{e\})}{ S= \Sep(P,Q) \cap \Sep(Q,R)}}  \mu((\hat{0},Q)_{R,e}) & = & \sum_{\hat{0} <_{R,e} Q \leq_{R,e} P} \mu((\hat{0},Q)_{R,e}) \\
           & = & - \mu((\hat{0},\hat{0})_{R,e}) \\
           & = & -1.
  \end{eqnarray*}

  Assume $\# S > 0$. 
  Set $$T^+ = \{ f\in E \setminus (S \cup \{e\})~|~R_f = +\} \mbox{~and~} 
  T^- = \{ f\in E \setminus S ~|~R_f = -\} \cup \{e\}.$$
  
  Then 
  \begin{eqnarray} 
     \label{eq:supertope}
     \sum_{\genfrac{}{}{0pt}{}{Q \in \tT(\emptyset,\{e\})}{ \Sep(P,Q) \cap \Sep(Q,R) \subseteq S}}  \mu((\hat{0},Q)_{R,e}) & = & 
     \sum_{Q \in \tT(T^+,T^-)} \mu((\hat{0},Q)_{R,e}) 
  \end{eqnarray}

  The right hand side of \eqref{eq:supertope} is the sum of 
  M\"obius function values from $\hat{0}$ to $P$ where 
  $P \neq \hat{0}$ ranges by
  \ref{thm:supertope} over the elements of a contractible poset.
  By classical M\"obius function theory (see e.g.\ \cite[(9.14)]{Bj}) this sum then is 
  $-\mu(\hat{0},\hat{0})=-1$ plus the M\"obius number of the poset. 
  Since the poset is contractible its M\"obius number is $0$ and we
  have shown that:

  \begin{eqnarray} 
     \label{eq:supertope2}
     \sum_{\genfrac{}{}{0pt}{}{Q \in \tT(\emptyset,\{e\})}{ \Sep(P,Q) \cap \Sep(Q,R) \subseteq S}}  \mu((\hat{0},Q)_{R,e}) & = & -1 
  \end{eqnarray}

  Now rewrite the right hand side of \eqref{eq:supertope2} as:

  \begin{eqnarray} 
     \label{eq:supertope3}
     \sum_{\genfrac{}{}{0pt}{}{Q \in \tT(\emptyset,\{e\})}{ \Sep(P,Q) \cap \Sep(Q,R) \subseteq S}}  \mu((\hat{0},Q)_{R,e}) & = & \sum_{T \subseteq S} \sum_{\genfrac{}{}{0pt}{}{Q \in \tT(\emptyset,\{e\})}{ \Sep(P,Q) \cap \Sep(Q,R) = T}}  \mu((\hat{0},Q)_{R,e}) 
  \end{eqnarray}

  By induction the summand 
  $\displaystyle{\sum_{\genfrac{}{}{0pt}{}{Q \in \tT(\emptyset,\{e\})}{ \Sep(P,Q) \cap \Sep(Q,R) = T}}  \mu((\hat{0},Q)_{R,e})}$ 
  is $0$ for $T \neq S,\emptyset$ and $-1$ for $T = \emptyset$.
  Thus combining \eqref{eq:supertope2} and \eqref{eq:supertope3} we obtain:

  \begin{eqnarray*} 
     \label{eq:supertope4}
     -1 & = & \sum_{\genfrac{}{}{0pt}{}{Q \in \tT(\emptyset,\{e\})}{ \Sep(P,Q) \cap \Sep(Q,R) \subseteq S}}  \mu((\hat{0},Q)_{R,e}) \\
        & = & -1+\sum_{\genfrac{}{}{0pt}{}{Q \in \tT(\emptyset,\{e\})}{ \Sep(P,Q) \cap \Sep(Q,R) = S}}  \mu((\hat{0},Q)_{R,e}) 
  \end{eqnarray*}

  From this we conclude $$\sum_{\genfrac{}{}{0pt}{}{Q \in \tT(\emptyset,\{e\})}{ \Sep(P,Q) \cap \Sep(Q,R) = S}}  \mu((\hat{0},Q)_{R,e}) = 0.$$
\end{proof}

\begin{Remark}
\begin{figure}[htp]
  \centering
  \begin{picture}(200,0)%
    \includegraphics[width=\textwidth]{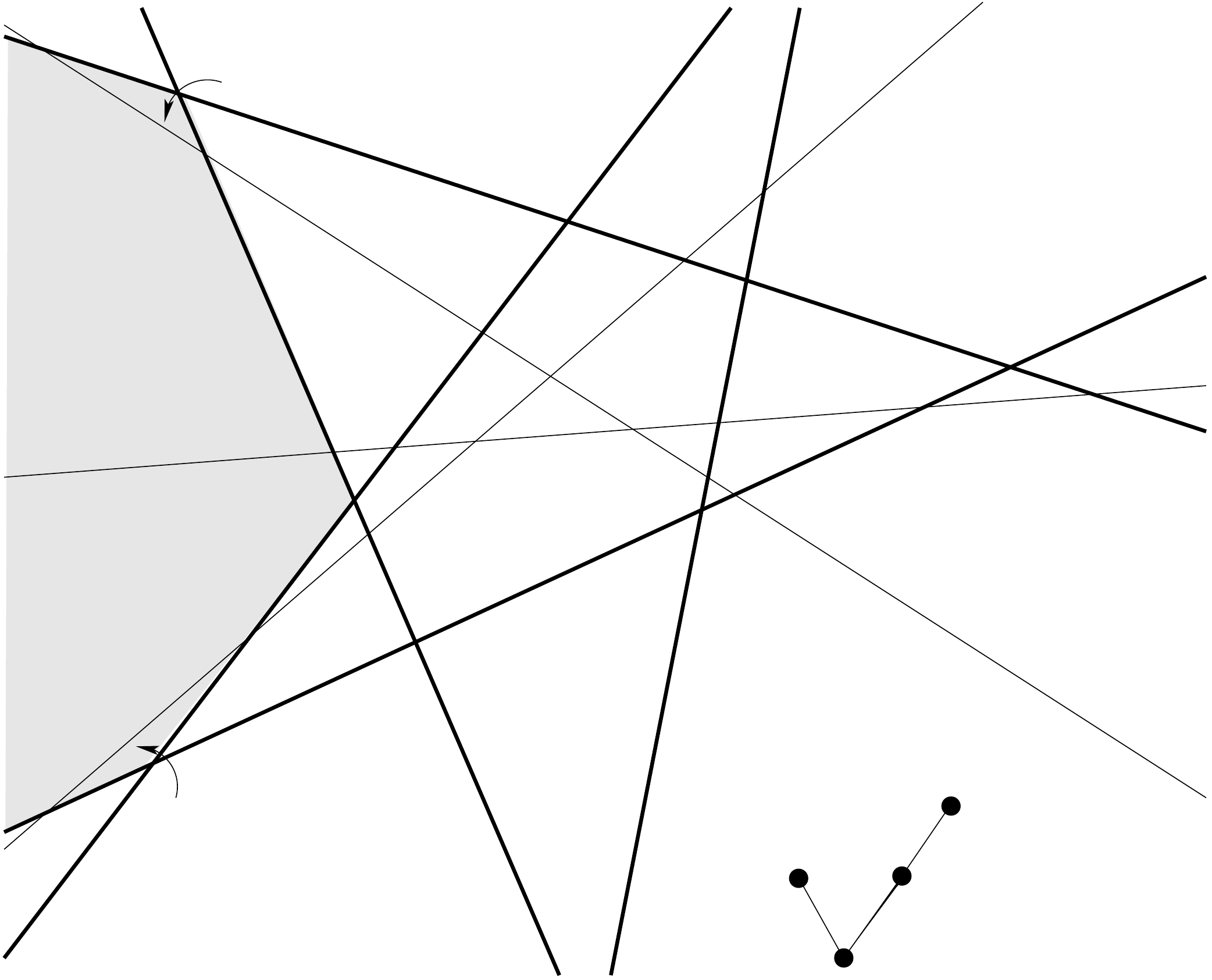}%
  \end{picture}%
  \setlength{\unitlength}{4144sp}%
  \begingroup\makeatletter\ifx\SetFigFont\undefined%
    \gdef\SetFigFont#1#2#3#4#5{%
    \reset@font\fontsize{#1}{#2pt}%
    \fontfamily{#3}\fontseries{#4}\fontshape{#5}%
    \selectfont}%
  \fi\endgroup%
  \begin{picture}(9540,7394)(1654,-8682)
    \put(820,-5750){\makebox(0,0)[lb]{\smash{{\SetFigFont{12}{14.4}{\rmdefault}{\mddefault}{\updefault}{\color[rgb]{0,0,0}$R$}%
}}}}
    \put(2400,-5000){\makebox(0,0)[lb]{\smash{{\SetFigFont{12}{14.4}{\rmdefault}{\mddefault}{\updefault}{\color[rgb]{0,0,0}$P$}%
}}}}
    \put(2850,-8700){\makebox(0,0)[lb]{\smash{{\SetFigFont{12}{14.4}{\rmdefault}{\mddefault}{\updefault}{\color[rgb]{0,0,0}(+,+,+)}%
}}}}
    \put(2650,-8200){\makebox(0,0)[lb]{\smash{{\SetFigFont{12}{14.4}{\rmdefault}{\mddefault}{\updefault}{\color[rgb]{0,0,0}(+,+,-)}%
}}}}
    \put(4000,-8200){\makebox(0,0)[lb]{\smash{{\SetFigFont{12}{14.4}{\rmdefault}{\mddefault}{\updefault}{\color[rgb]{0,0,0}(+,-,+)}%
}}}}
    \put(4300,-7800){\makebox(0,0)[lb]{\smash{{\SetFigFont{12}{14.4}{\rmdefault}{\mddefault}{\updefault}{\color[rgb]{0,0,0}(-,-,+)}%
}}}}
    \put(-150,-3350){\makebox(0,0)[lb]{\smash{{\SetFigFont{12}{14.4}{\rmdefault}{\mddefault}{\updefault}{\color[rgb]{0,0,0}(+,+,-)}%
}}}}
    \put(-1000,-4700){\makebox(0,0)[lb]{\smash{{\SetFigFont{12}{14.4}{\rmdefault}{\mddefault}{\updefault}{\color[rgb]{0,0,0}(+,-,+)}%
}}}}
    \put(-1000,-6400){\makebox(0,0)[lb]{\smash{{\SetFigFont{12}{14.4}{\rmdefault}{\mddefault}{\updefault}{\color[rgb]{0,0,0}(+,+,+)}%
}}}}
    \put(-600,-7800){\makebox(0,0)[lb]{\smash{{\SetFigFont{12}{14.4}{\rmdefault}{\mddefault}{\updefault}{\color[rgb]{0,0,0}(-,-,+)}%
}}}}
    \put(500,-4200){\makebox(0,0)[lb]{\smash{{\SetFigFont{12}{14.4}{\rmdefault}{\mddefault}{\updefault}{\color[rgb]{0,0,0}$1$}%
}}}}
    \put(-1100,-5600){\makebox(0,0)[lb]{\smash{{\SetFigFont{12}{14.4}{\rmdefault}{\mddefault}{\updefault}{\color[rgb]{0,0,0}$2$}%
}}}}
    \put(4100,-3200){\makebox(0,0)[lb]{\smash{{\SetFigFont{12}{14.4}{\rmdefault}{\mddefault}{\updefault}{\color[rgb]{0,0,0}$3$}%
}}}}
  \end{picture}%
  \caption{The shaded region has two maximal elements}
  \label{fig:1}
\end{figure}

Denham and Hanlon mention in \cite{DH} that a ``routine argument
shows'' that the poset on $\{Q \in \tT(\emptyset,\{e\})\mid
\Sep(P,Q) \cap \Sep(Q,R) = S\}$ induced by $\tT_R$ always contains a
unique maximal element. While this can be shown to hold true for line
arrangements, it fails already in 3-dimensional hyperplane
	arrangements. In Figure~\ref{fig:1}, we provide a counterexample.
The
element $e$ is supposed to be the drawing plane. The tope $R$ is below
and $P$ and the shaded region above $e$. The separator $\Sep(P,R)$
without $e$ is given by the thin lines, while the intersection of the
remaining hyperplanes with $e$ are the bold lines. $S$ is given by the
two bold lines that intersect in a vertex at $R$. The poset induced on the
shaded regions is 
sketched on the bottom of the figure. 
While it is contractible it has two
maximal elements.
\end{Remark}
\section{The Varchenko matrix}
\label{sec:varchenko}

In this section we prove \ref{thm:varchenko} and its corollaries.
The proof consists of a factorization of the matrix
$\Var$ into matrices with controllable determinant.

Recall, that we assume $\tT$ to be linearly ordered.
For any sign pattern $\epsilon =
(\epsilon_1,\epsilon_2) \in \{ +,-\}^2$ let $\Var^{e,\epsilon}$
be a $(\ell \times \ell)$-matrix
with rows indexed by $\tT(\{e\} ,\emptyset)$ for $\epsilon_1 =+$,
$\tT(\emptyset, \{e\})$ for $\epsilon_1 = -$ and columns indexed
by $\tT(\{e\},\emptyset)$ for $\epsilon_2 = +$, $\tT(\emptyset, \{e\})$ for
$\epsilon_2 = -$. For a tope $P$ indexing a row and
a tope $Q$ indexing a column we set $\Var^{e,\epsilon}_{P,Q} = 
\Var_{P,Q}$. 
We set $\ell = \# \tT(\{e\},\emptyset) = 
\# \tT(\emptyset,\{e\})$. Note that $\ell = \frac{1}{2} \# \tT$ is
independent of $e$.
We fix a linear ordering on $E$ and set $M^e$ to be
the  $(\ell \times \ell)$-matrix with rows indexed
by $\tT(\emptyset,\{e\})$, columns indexed by $\tT(\{e\},\emptyset)$ and entries
\[M^e_{Q,R} = \left\{
  \begin{array}[h]{cl}
- \mu((\hat{0},Q)_{R,e}) \cdot \Var_{Q,R} & \text{if }e \text{ is the maximal element of }\Sep(Q,R) \text{
 and }\\0& \text{otherwise,}    
  \end{array}\right.\]
 where
$Q \in \tT(\emptyset,\{e\})$ and $R \in \tT(\{e\},\emptyset)$.  
We write $\Id_{\ell}$ for the $(\ell \times \ell)$-identity matrix
and define 
$$\mM^e = 
      \left(  \begin{array}{cc} 
         \Id_{\ell}  & M^e  \\ 
         M^e  & \Id_{\ell} 
       \end{array}
      \right) .$$

\begin{Lemma}
   \label{lem:fac1}
   Let $e$ be the maximal element of $E$. Then 
   $\Var^{e,(-,+)}$ factors as
   \begin{eqnarray}
      \label{eq:tobeproved}
      \Var^{e,(-,+)} & = & \Var^{e,(-,-)} \cdot M^e.
   \end{eqnarray}
\end{Lemma}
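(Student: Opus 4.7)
The plan is to compare the $(P,R)$-entries of both sides and reduce the identity to an application of \ref{cor:crucial}. Fix $P\in\tT(\emptyset,\{e\})$ and $R\in\tT(\{e\},\emptyset)$; we must verify that
\[
\Var_{P,R} \;=\; \sum_{Q\in\tT(\emptyset,\{e\})} \Var_{P,Q}\cdot M^e_{Q,R}.
\]
The first observation is that since $e$ is the maximal element of $E$, and since $Q_e=-\ne +=R_e$ forces $e\in\Sep(Q,R)$, the condition ``$e$ is the maximal element of $\Sep(Q,R)$'' in the definition of $M^e$ is automatic. So the sum on the right equals $-\sum_Q \mu((\hat{0},Q)_{R,e})\cdot\Var_{P,Q}\cdot\Var_{Q,R}$.

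The second step is the standard separator identity $\Sep(P,R)=\Sep(P,Q)\triangle\Sep(Q,R)$ (verified coordinatewise by a case analysis on $Q_f$), which yields
\[
\Var_{P,Q}\cdot\Var_{Q,R} \;=\; \Var_{P,R}\cdot\prod_{f\in\Sep(P,Q)\cap\Sep(Q,R)} U_f^2.
\]
Since $P_e=Q_e=-$, the element $e$ never lies in $\Sep(P,Q)\cap\Sep(Q,R)$, so the intersection is always a subset of $E\setminus\{e\}$. Substituting and grouping topes $Q$ according to $S:=\Sep(P,Q)\cap\Sep(Q,R)$, we factor $\Var_{P,R}$ out and are reduced to showing
\[
1 \;=\; \sum_{S\subseteq E\setminus\{e\}} \Bigl(\prod_{f\in S} U_f^2\Bigr)\cdot \Bigl(-\sum_{\substack{Q\in\tT(\emptyset,\{e\})\\ \Sep(P,Q)\cap\Sep(Q,R)=S}} \mu\bigl((\hat{0},Q)_{R,e}\bigr)\Bigr).
\]

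The final step invokes \ref{cor:crucial} directly: the inner sum equals $-1$ when $S=\emptyset$ and $0$ otherwise. (Note that \ref{cor:crucial} is formulated for $e\notin S\subseteq E$, which matches our situation since $S\subseteq E\setminus\{e\}$.) Therefore only the term $S=\emptyset$ survives, contributing $\prod_{f\in\emptyset} U_f^2\cdot(-(-1))=1$, which establishes the desired identity entry by entry and hence the matrix factorization.

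The computation is essentially bookkeeping once the two ingredients are in place; the main conceptual content is recognizing that the sum over $Q$ on the right is precisely the Möbius-weighted sum controlled by \ref{cor:crucial}, and that the maximality of $e$ in $E$ is what eliminates the extra constraint in $M^e$ so that the full range of $Q\in\tT(\emptyset,\{e\})$ is summed over. The only step that requires a small argument is the separator identity and its product-form corollary, but it is elementary.
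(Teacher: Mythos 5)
Your proposal is correct and follows essentially the same route as the paper's proof: compare entries, apply the separator/product identity $\Var_{P,Q}\Var_{Q,R}=\Var_{P,R}\prod_{f\in\Sep(P,Q)\cap\Sep(Q,R)}U_f^2$, group terms by $S=\Sep(P,Q)\cap\Sep(Q,R)$, and invoke \ref{cor:crucial}. You spell out two small points the paper leaves implicit (that the maximality of $e$ in $E$ makes the constraint in $M^e$ vacuous, and that $e\notin S$ so \ref{cor:crucial} applies), which is a welcome clarification.
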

\begin{proof}
  For $P \in \tT(\emptyset,\{e\})$ and $R \in \tT(\{e\},\emptyset)$ the entry in row $P$ and
  column $R$ on the left hand side of \eqref{eq:tobeproved} is 
  $\Var_{P,R}$. On the right hand side the corresponding entry is:
  \begin{eqnarray*} 
     \sum_{Q \in \tT(\emptyset,\{e\})} \Var_{P,Q} \cdot M^e_{Q,R} & = & - \sum_{Q \in \tT(\emptyset,\{e\})} \mu((\hat{0},Q)_{R,e}) \cdot  \Var_{P,Q} \cdot \Var_{Q,R} 
  \end{eqnarray*} 

  By definition for $Q \in \tT(\emptyset,\{e\})$ we have 
  \begin{eqnarray*}
    \Var_{P,Q} \cdot \Var_{Q,R} & = & \Var_{P,R} \cdot \prod_{f \in \Sep(P,Q) \cap \Sep(Q,R)} U_f^2 
  \end{eqnarray*}

  Thus the claim of the lemma is proved once we have shown that for a fixed subset $S \subseteq E$ 
  and fixed $Q,R$ we have:
  \begin{eqnarray}
     \label{eq:mueq}
     \sum_{\genfrac{}{}{0pt}{}{Q \in \tT(\emptyset,\{e\})}{ S= \Sep(P,Q) \cap \Sep(Q,R)}}  \mu((\hat{0},Q)_{R,e}) = \left\{ 
                \begin{array}{cc} 0 & \mbox{~if~} S\neq \emptyset \\
                                 -1 & \mbox{~otherwise.} 
                \end{array} \right. .
  \end{eqnarray}

  But this is the content of \ref{cor:crucial} and we are done.
\end{proof}

Next we use the matrices $\mM^e$ to factorize $\Var$. 
The following lemma yields the inductive step in the 
factorization.
 
\begin{Lemma}
  \label{lem:fac}
  Let $e$ be the maximal element of $E$ and
  let $\Var_{U_e = 0}$ be the matrix $\Var$ after
  evaluating $U_e$ to $0$. 
  Then 
  $$\Var = \Var_{U_e = 0} \cdot \mM^e 
  $$
\end{Lemma}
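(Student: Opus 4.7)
The plan is to verify the identity block by block with respect to the partition $\tT = \tT(\{e\},\emptyset) \cup \tT(\emptyset,\{e\})$. First I would write
$$\Var = \begin{pmatrix} \Var^{e,(+,+)} & \Var^{e,(+,-)} \\ \Var^{e,(-,+)} & \Var^{e,(-,-)} \end{pmatrix}$$
and observe that the two diagonal blocks are free of $U_e$ (since $P_e = Q_e$ forces $e \notin \Sep(P,Q)$), while every entry of the two off-diagonal blocks carries a factor of $U_e$ (since then $P_e = -Q_e \neq 0$). Hence
$$\Var_{U_e = 0} = \begin{pmatrix} \Var^{e,(+,+)} & 0 \\ 0 & \Var^{e,(-,-)} \end{pmatrix}.$$

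Next I would use that $\mM^e$ has identity blocks on its diagonal and $M^e$ on the off-diagonal to compute
$$\Var_{U_e=0}\cdot \mM^e = \begin{pmatrix} \Var^{e,(+,+)} & \Var^{e,(+,+)}\cdot M^e \\ \Var^{e,(-,-)}\cdot M^e & \Var^{e,(-,-)} \end{pmatrix}.$$
The two diagonal blocks already match. The lower-left identity $\Var^{e,(-,+)} = \Var^{e,(-,-)}\cdot M^e$ is exactly \ref{lem:fac1}. The upper-right identity $\Var^{e,(+,-)} = \Var^{e,(+,+)}\cdot M^e$ follows by applying \ref{lem:fac1} in the oriented matroid obtained from $\lL$ by reorienting at $e$: this reorientation interchanges $\tT(\{e\},\emptyset)$ with $\tT(\emptyset,\{e\})$ while leaving separators, Varchenko entries, tope-poset intervals and their M\"obius numbers invariant, so the factor $M^e$ transports unchanged under the swap.

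The main obstacle is the notational bookkeeping around $\mM^e$: one has to pin down, for the upper-right block, exactly which indexing of $M^e$ is meant, and then check that reorientation at $e$ leaves every ingredient of the definition of $M^e$ invariant, namely the condition that $e$ is maximal in $\Sep$, the monomial $\Var_{Q,R}$, and the M\"obius numbers $\mu((\hat 0, Q)_{R,e})$. All of these depend only on the unsigned structure of the separator and on the tope-poset relation, both of which are insensitive to flipping the $e$-component, so the verification is routine and the block computation then closes.
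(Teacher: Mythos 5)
Your proof is correct and follows essentially the same structure as the paper's: block-decompose $\Var$ along the partition $\tT(\{e\},\emptyset)\cup\tT(\emptyset,\{e\})$, observe that $\Var_{U_e=0}$ is the block-diagonal part (every off-diagonal entry carries the factor $U_e$), and then match the off-diagonal blocks using \ref{lem:fac1}. The one place you diverge is how the second off-diagonal identity $\Var^{e,(+,-)}=\Var^{e,(+,+)}\cdot M^e$ is obtained. The paper takes the antipodal map: since $\Var_{P,Q}=\Var_{-P,-Q}$, the block symmetries $\Var^{e,(-,-)}=\Var^{e,(+,+)}$ and $\Var^{e,(-,+)}=\Var^{e,(+,-)}$ hold outright (compatibly with the numbering $P_{\ell+i}=-P_i$ that implicitly pins down what $M^e$ means in the second off-diagonal slot of $\mM^e$), so a single invocation of \ref{lem:fac1} suffices. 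You instead apply \ref{lem:fac1} a second time in the reoriented matroid ${}_{\{e\}}\lL$. This also works, because reorientation at $e$, like the antipodal map, preserves separators, the tope-poset relation, and hence the $\mu$-values entering $M^e$; but it requires the extra verification that the matrix produced by \ref{lem:fac1} in ${}_{\{e\}}\lL$ is literally the one sitting in the upper-right corner of $\mM^e$, which you correctly flag as ``notational bookkeeping.'' Concretely, one still has to use the antipodal identification $-P_i=P_{\ell+i}$ to compare the reorientation-transported matrix with the block of $\mM^e$ defined via that numbering, so the antipodal map makes a cameo either way; the paper's formulation simply makes it explicit and avoids passing to a second oriented matroid. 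Both arguments are short and sound, so this is a cosmetic rather than a substantive difference.
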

\begin{proof}
   Let $\tT(\emptyset,\{e\}) = \{ P_1,\ldots, P_{\ell} \}$ and $\tT(\{e\},\emptyset) = \{ P_{\ell +1},\ldots, P_{2\ell}\}$ 
   be numbered such that $-P_i = P_{\ell+i}$ for
   $1 \leq i \leq \ell$. 
   Assume the rows and columns of $\Var$ are ordered according to this
   numbering of $\tT$. This yields a block decomposition of $\Var$ as
   $$\Var = \left( 
       \begin{array}{cc} 
         \Var^{e,(-,-)} & \Var^{e,(-,+)} \\ 
         \Var^{e,(+,-)} & \Var^{e,(+,+)} 
       \end{array}
      \right) .
   $$

   Since $\Var_{P,Q} = \Var_{-P,-Q}$ it follows that
   $\Var^{e,(-,-)}= \Var^{e,(+,+)}$ and
   $\Var^{e,(-,+)}= \Var^{e,(+,-)}$.
   Then by \ref{lem:fac1} we know 
   $\Var^{e,(-,+)}  = \Var^{e,(-,-)} \cdot M^e$ and hence
   $$\Var^{e,(+,-)} = \Var^{e,(-,+)} = \Var^{e,(-,-)} \cdot M^e = \Var^{e,(+,+)} \cdot M^e.$$
   Thus 

   \begin{eqnarray}
     \label{eq:ma1}
     \Var & = & \left( 
       \begin{array}{cc} 
         \Var^{e,(-,-)} & 0 \\ 
         0 & \Var^{e,(+,+)} 
       \end{array}
       \right) \cdot 
       \left( 
       \begin{array}{cc} 
         \Id_{\ell}  & M^e  \\ 
         M^e  & \Id_{\ell} 
       \end{array}
       \right) \\
     \nonumber & = &  \left( 
       \begin{array}{cc} 
	 \Var^{e,(-,-)} & 0 \\ 
	 0 & \Var^{e,(+,+)} 
       \end{array}
       \right) \cdot \mM^e .
   \end{eqnarray}

   Now the monomial $\Var_{P,Q}$ has a factor $U_e$ if and only if
   $P \in \tT(\emptyset,\{e\})$ and $Q \in \tT(\{e\},\emptyset)$ or
   $P \in \tT(\{e\},\emptyset)$ and $Q \in \tT(\emptyset,\{e\})$.
   Hence 
   \begin{eqnarray}
     \label{eq:ma2}
     \Var_{U_e = 0}  & = & \left( 
        \begin{array}{cc} 
           \Var^{e,(-,-)} & 0 \\ 
	   0 & \Var^{e,(+,+)} 
	\end{array} 
	\right).
   \end{eqnarray}
   Combining \eqref{eq:ma1} und \eqref{eq:ma2} yields the claim.
\end{proof}
	   
Now we are in position to state and prove the crucial factorization.

\begin{Proposition}
  \label{pro:fac}
  Let $E = \{ e_1 \prec \cdots \prec e_r\}$ be a fixed
  ordering. Then 
  $$\Var = \mM^{e_1} \cdots \mM^{e_r}.$$ 
\end{Proposition}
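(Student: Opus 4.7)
The strategy is to iterate \ref{lem:fac} in a telescoping fashion. For $i = 0,1,\dots,r$, set $\Var^{(i)}$ to be the matrix obtained from $\Var$ by specializing $U_{e_{i+1}},\dots,U_{e_r}$ to $0$; then $\Var^{(r)} = \Var$ and $\Var^{(0)} = \Id$. The proposition reduces to establishing the one-step identity $\Var^{(i)} = \Var^{(i-1)} \cdot \mM^{e_i}$ for every $i = 1,\dots,r$, since telescoping then yields $\Var = \Var^{(0)} \cdot \mM^{e_1} \cdots \mM^{e_r} = \mM^{e_1} \cdots \mM^{e_r}$. The case $i = r$ is exactly \ref{lem:fac}.

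For the remaining values of $i$, I would adapt the proofs of \ref{lem:fac1} and \ref{lem:fac} with $\Var$ replaced by $\Var^{(i)}$ and $e$ replaced by $e_i$ (the maximal coordinate on which $\Var^{(i)}$ still depends nontrivially). The matrix $\Var^{(i)}$ retains the antipodal symmetry, and $\Var^{(i)}_{U_{e_i}=0} = \Var^{(i-1)}$ remains block diagonal with respect to the $e_i$-partition of $\tT$ for exactly the same reason as in the proof of \ref{lem:fac}. The main computation in the analogue of \ref{lem:fac1} then groups summands by $S = \Sep(P,Q) \cap \Sep(Q,R)$ and reduces to a M\"obius identity in the style of \ref{cor:crucial}; but here only those $Q$ which agree with both $P$ and $R$ on the killed coordinates $\{e_{i+1},\dots,e_r\}$ can contribute, because this agreement is forced both by $\Var^{(i)}_{P,Q} \neq 0$ and by the maximality condition in the definition of $M^{e_i}_{Q,R}$ (which requires $\Sep(Q,R) \subseteq \{e_1,\dots,e_i\}$).

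The main obstacle is therefore to establish the block-restricted version of \ref{cor:crucial}. The set of $Q \in \tT(\emptyset,\{e_i\})$ satisfying $\Sep(P,Q) \cap \Sep(Q,R) \subseteq S$ together with the additional block constraint is precisely the intersection of the supertope appearing in the proof of \ref{cor:crucial} with the block supertope $\tT(A^+,A^-)$ determined by the common signs of $P$ and $R$ on $\{e_{i+1},\dots,e_r\}$; any intersection of supertopes is again a supertope, and by \ref{thm:supertope} it is contractible. Hence the M\"obius-function argument of \ref{cor:crucial} carries over verbatim to the block-restricted sum and yields the required identity. Plugging this into the argument of \ref{lem:fac1} and then \ref{lem:fac} establishes $\Var^{(i)} = \Var^{(i-1)} \cdot \mM^{e_i}$, completing the proof by iteration.
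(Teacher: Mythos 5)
Your telescoping reduction to the one-step identity $\Var^{(i)} = \Var^{(i-1)} \cdot \mM^{e_i}$ (with $\Var^{(i)} = \Var_{U_{e_{i+1}} = \cdots = U_{e_r} = 0}$) is the same intermediate step the paper establishes by its downward induction, but you propose a genuinely different way of proving it for $i<r$. The paper applies \ref{lem:fac} to the \emph{unspecialized} matrix $\Var$ after temporarily reordering $E$ so that $e_i$ becomes the maximal element, obtaining $\Var = \Var_{U_{e_i}=0}\cdot\nN$, where $\nN$ is built like $\mM^{e_i}$ but without the requirement that $e_i$ be maximal in $\Sep(Q,R)$; it then specializes $U_{e_{i+1}}=\cdots=U_{e_r}=0$ on both sides and observes that this kills the monomial $\Var_{Q,R}$ in precisely those entries of $\nN$ where $e_i$ fails to be maximal in $\Sep(Q,R)$, so $\nN$ specializes to $\mM^{e_i}$ and the one-step identity falls out with no new M\"obius analysis. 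Your alternative, re-running the argument of \ref{lem:fac1} directly on the specialized matrices, also works, but the ``block-restricted version of \ref{cor:crucial}'' you flag as the main obstacle is in fact unnecessary: as you observe, any contributing $Q$ must agree with $P$ and $R$ on $\{e_{i+1},\ldots,e_r\}$, which forces the only occurring sets $S = \Sep(P,Q)\cap\Sep(Q,R)$ to lie in $\{e_1,\ldots,e_{i-1}\}$; and for such an $S$, given that $P$ and $R$ agree on $\{e_{i+1},\ldots,e_r\}$ and differ on $e_i$, every $Q$ with $Q_{e_i}=-R_{e_i}$ and $S=\Sep(P,Q)\cap\Sep(Q,R)$ automatically satisfies the block constraint, so \ref{cor:crucial} applies verbatim with $e=e_i$. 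Your proof is therefore correct, but the supertope-intersection digression can be dropped in favor of invoking \ref{cor:crucial} as is; the paper's specialize-an-identity trick remains the cleaner route because it never re-opens the M\"obius argument at all.
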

\begin{proof}
  We will prove by downward induction on $i$ that
  \begin{eqnarray}
     \label{eq:var}
     \Var & = & \Var_{U_i=\cdots = U_r = 0} \cdot \mM^{e_i} \cdots \mM^{e_r}.
  \end{eqnarray}

  For $i=r$ the assertion follows directly from 
  \ref{pro:fac}.
  For the inductive step assume
  $i > 1$ and \eqref{eq:var} holds for $i$. 
  We know from \ref{lem:fac} that if we choose a linear ordering
  on $E$ for which $e_{i-1}$ is the largest element then 
  \begin{eqnarray}
	    \label{eq:ind1} 
	    \Var & = & \Var_{U_{i-1}=0}\cdot \nN,
	  \end{eqnarray}
	  where
	  $\nN = (N_{Q,R})_{Q,R \in \tT}$ is defined as
	  $$N_{Q,R} = \left\{ 
	    \begin{array}{ccc} 
		1 & \mbox{~if~} & Q = R \\ 
	       -\mu((\hat{0},Q)_{R,e_{i-1}})\, \Var_{Q,R} & \mbox{~if~} & R \in \tT(\{e_{i-1}\},\emptyset), Q \in \tT(\emptyset,\{e_{i-1}\})  \\
	       -\mu((\hat{0},-Q)_{-R,e_{i-1}})\, \Var_{Q,R} & \mbox{~if~} & -R \in \tT(\{e_{i-1}\},\emptyset), -Q \in \tT(\emptyset,\{e_{i-1}\}) \\

		0 & \mbox{~otherwise} & 
	    \end{array} \right. . 
	  $$
	  This shows:
	  $$(N_{Q,R})_{U_i=\cdots   = U_r = 0} \left\{ 
	    {\small \begin{array}{cl} 
		1 & \mbox{~if~}  Q = R \\ 
	       -\mu((\hat{0},Q)_{R,e_{i-1}})\, \Var_{Q,R} & \mbox{~if~} e_{i-1} \mbox{~is the largest element in~} \Sep(Q,R),  \\ 
                  &              R \in \tT(\{e_{i-1}\},\emptyset), Q \in \tT(\emptyset, \{e_{i-1}\})  \\
	       -\mu((\hat{0},-Q)_{-R,e_{i-1}})\, \Var_{Q,R} & \mbox{~if~} e_{i-1} \mbox{~is the largest element in~} \Sep(Q,R), \\
                  &             -R \in \tT(\{e_{i-1}\},\emptyset), -Q \in \tT(\emptyset,\{e_{i-1}\})  \\
		0 & \mbox{~otherwise}  
	    \end{array}} \right. . 
	  $$
	  But then
	  $\nN_{U_i=\cdots   = U_r = 0} = \mM^{e_{i-1}}$. 
	 
	  Now \eqref{eq:ind1} implies
	  \begin{eqnarray*}
	      \Var_{U_i=\cdots = U_{r}=0} &= & \Var_{U_{i-1}= \cdots = U_{r} = 0} \cdot
	       \nN_{U_i = \cdots = U_r = 0} \\
					  & = & \Var_{U_{i-1}= \cdots = U_{r} = 0} \cdot
	\mM^{e_{i-1}} 
	  \end{eqnarray*}

	  With the induction hypothesis this completes the induction step by
	  \begin{eqnarray*}
	     \Var & = & \Var_{U_i=\cdots = U_r = 0} \cdot \mM^{e_i} \cdots \mM^{e_r} \\
		  & = & \Var_{U_{i-1} = U_i=\cdots = U_r = 0} \cdot \mM^{e_{i-1}} \cdots \mM^{e_r}. 
	  \end{eqnarray*}

	  For $i =1$ the matrix $\Var_{U_1=\cdots = U_r = 0}$ is the identity matrix. 
	  Thus \eqref{eq:var} yields:
	  $$\Var = \mM^{e_1} \cdots \mM^{e_r}.$$
	\end{proof}

Let $F \in \lL$ and $e \in z(F)$ be the maximal element of
$z(F)$. Define $\tT^{F,e}$ as the set of topes $P \in \tT$
such that $F$ is the maximal element of $\lL$ for which $F_e =
0$ and $F \leq P$.

\begin{Proposition}\label{pro:contraction}
  For any pair of topes  $Q,R \in \tT^{F,e}$ we have 
  \begin{eqnarray*} 
     \mu((\hat{0},\pm Q)_{\pm R,e}) = 
       \left\{ \begin{array}{ccc} 
          -(-1)^{\rank(\lL|_{z(F)}} & \mbox{~if~} & Q_{z(F)} = -R_{z(F)} \\ 
          0 & \mbox{~otherwise~} & 
  \end{array} \right. .  
  \end{eqnarray*} 
  
\end{Proposition}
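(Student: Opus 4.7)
The plan is to reduce the calculation to an application of \ref{co:moebius} inside the smaller oriented matroid $\lL|_{z(F)}$. The topes in $\cstar(F)$ are parametrized by topes of $\lL|_{z(F)}$ via the restriction map, so the desired Möbius number should come from an interval in that smaller poset once the contribution of topes lying outside $\cstar(F)$ has been topologically removed.

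First I would peel off the topes in the open interval that do not contain $F$. Let $W := (\hat{0},Q)_{R,e}\setminus\cstar(F)$. For any $T\in W$, the hypothesis $R \in \tT^{F,e}$ says that $F$ is the maximal covector with $F\leq R$ and $F_e=0$, so \ref{thm:moebius} applies with $P_{top}=T$ and yields that $(\hat{0},T)_{R,e}$ is contractible. Since any element of $(\hat{0},Q)_{R,e}$ that lies below $T$ is automatically below $T$ in $\tT_{R,e}$, \ref{cor:subquillen} applies to the pair $\PP=(\hat{0},Q)_{R,e}$, $\SS=W$, giving
\[
  (\hat{0},Q)_{R,e} \;\simeq\; V := (\hat{0},Q)_{R,e}\cap\cstar(F),
\]
and by \ref{pr:moebius} the two posets share the same Möbius number.

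Second I would identify $V$ with a tope--poset interval inside $\lL|_{z(F)}$. Every $T\in\cstar(F)$ agrees with $F$ (and hence with $R$) on $E\setminus z(F)$, so $\Sep(R,T)\subseteq z(F)$ and $\Sep(R,T)=\Sep(R|_{z(F)},T|_{z(F)})$. The restriction map $T\mapsto T|_{z(F)}$ thus carries $\cstar(F)$ bijectively onto $\tT(\lL|_{z(F)})$ while preserving $\preceq_R$, and since $e\in z(F)$ the condition $T_e=-R_e$ transports verbatim. Hence $V$ is poset-isomorphic to the open interval $(\hat{0},Q|_{z(F)})_{R|_{z(F)},e}$ in $\tT(\lL|_{z(F)})_{R|_{z(F)},e}$.

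Third I would verify that $e$ does not define a proper face of $Q|_{z(F)}$ in $\lL|_{z(F)}$: any covector of $\lL|_{z(F)}$ strictly above $\nnull$, below $Q|_{z(F)}$, and zero at $e$ lifts (by composition with $F$) to a covector of $\lL$ strictly above $F$, below $Q$, and zero at $e$, contradicting the maximality of $F$ provided by $Q\in\tT^{F,e}$. Consequently, \ref{co:moebius} applied inside $\lL|_{z(F)}$ with base tope $R|_{z(F)}$ delivers the stated dichotomy: the Möbius number vanishes unless $Q|_{z(F)}=-R|_{z(F)}$, i.e.\ $Q_{z(F)}=-R_{z(F)}$, in which case it is the claimed signed power of $-1$ determined by $\rank(\lL|_{z(F)})$. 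The version with $(-Q,-R)$ in place of $(Q,R)$ follows by the global negation symmetry of $\lL$, which carries $\tT^{F,e}$ to $\tT^{-F,e}$ and $\tT_{R,e}$ to $\tT_{-R,e}$ isomorphically.

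The main obstacle is the careful bookkeeping in the second step -- establishing the order-preserving bijection between covectors of $\lL$ comparable with $F$ and covectors of $\lL|_{z(F)}$ -- together with matching the sign obtained from \ref{co:moebius} in the smaller matroid against the exponent appearing in the statement.
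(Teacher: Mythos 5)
Your proof follows essentially the same route as the paper's own: restrict via $T\mapsto T|_{z(F)}$ to the localization $\lL|_{z(F)}$, observe that by maximality of $F$ the element $e$ no longer defines a proper face of $Q|_{z(F)}$, and apply \ref{co:moebius} in $\lL|_{z(F)}$. The one thing worth pointing out is that your first step is vacuous: since $F\le Q$ and $F\le R$ we have $\Sep(R,Q)\subseteq z(F)$, hence any $T\in(\hat{0},Q)_{R,e}$ satisfies $\Sep(R,T)\subseteq\Sep(R,Q)\subseteq z(F)$ and therefore $F\le T$, so $(\hat{0},Q)_{R,e}\subseteq\cstar(F)$ already and there is nothing to peel off with \ref{thm:moebius}/\ref{cor:subquillen} --- the restriction map is a poset isomorphism on the nose, which is what the paper's proof asserts directly. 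Finally, be aware that \ref{co:moebius} actually delivers $(-1)^{\rank(\lL|_{z(F)})}$, not $-(-1)^{\rank(\lL|_{z(F)})}$, in the case $Q_{z(F)}=-R_{z(F)}$; this is a sign typo in the Proposition's statement (harmless downstream, since only $a(F)^2$ survives in \ref{lem:formula}), and your argument, like the paper's own, produces the correct value rather than the literally stated one.
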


\begin{proof}
  By the definition of $\tT^{F,e}$ we have $F \le Q,R$. Thus, if we
  consider the poset $\tT_{R_{z(F)},e}$ in the contraction $\lL/z(F)$
  we find that the interval $(\hat{0},\pm Q)_{\pm R,e}$ is isomorphic
  to $(\hat{0},\pm Q_{z(F)})_{\pm R_{z(F)},e}$. Furthermore, since $F$
  is the maximal element satisfying $F_e=0$ and $F \le Q$, $e$ does
  not define a proper face of $Q_{z(F)}$. Hence the claim follows from
\ref{co:moebius}.
\end{proof}
We define $b_{F,e} = 0$ if $e$ is not the maximal element of 
$z(F)$ and $\frac{1}{2} \# \tT^{F,e}$ otherwise. 
Since $P \mapsto F \circ (-P)$ is a perfect pairing on 
$\tT^{F,e}$ it follows that $\tT^{F,e}$ contains an even number of
topes. In particular, $b_{F,e}$ is an integer. 
We denote by $\mM^{F,e}$ the submatrix of $\mM^e$ obtained by selecting rows and columns indexed by $\tT^{F,e}$.  

\begin{Lemma}
  \label{lem:formula}
  Let $F \in \lL$ and $e \in z(F)$.
  If $\tT^{F,e} \neq \emptyset$. 
  then 
  $$\det(\mM^{F,e}) = (1-a(F)^2)^{b_{F,e}}.$$
\end{Lemma}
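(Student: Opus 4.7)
The plan is to exhibit $\mM^{F,e}$ as a very structured $2\times 2$ block matrix and then finish with a Schur-complement determinant computation. Set $\tT^{F,e}_- := \tT^{F,e}\cap\tT(\emptyset,\{e\})$ and $\tT^{F,e}_+ := \tT^{F,e}\cap\tT(\{e\},\emptyset)$. The involution $\sigma(P) = F\circ(-P)$ preserves $\tT^{F,e}$, flips $P_e$, and is fixed-point-free, so it restricts to a bijection $\tT^{F,e}_-\to\tT^{F,e}_+$; in particular $\#\tT^{F,e}_- = \#\tT^{F,e}_+ = \tfrac12\#\tT^{F,e}$, which equals $b_{F,e}$ when $e$ is the maximum of $z(F)$. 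Ordering the rows and columns of $\mM^{F,e}$ so that $\tT^{F,e}_-$ precedes $\tT^{F,e}_+$, the diagonal blocks of $\mM^e$ give
\[
\mM^{F,e}=\begin{pmatrix}\Id & X \\ Y & \Id\end{pmatrix},
\]
where $X$ and $Y$ are the restrictions of the top-right and bottom-left copies of $M^e$ (as in the proof of \ref{lem:fac}) to these index sets.

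The next step evaluates $X$ and $Y$. For $P,Q\in\tT^{F,e}$ we have $F\le P,Q$, so $\Sep(P,Q)\subseteq z(F)$. Suppose first that $e$ is not the maximum of $z(F)$. Every off-diagonal entry of $\mM^{F,e}$ then vanishes: either $P|_{z(F)}=-Q|_{z(F)}$ and $\Sep(P,Q)=z(F)$ contains some element strictly larger than $e$, violating the maximality condition in the definition of $M^e$; or $P|_{z(F)}\ne -Q|_{z(F)}$ and the relevant M\"obius number is zero by \ref{pro:contraction}. Hence $\mM^{F,e}=\Id$ and $\det\mM^{F,e}=1=(1-a(F)^2)^{0}=(1-a(F)^2)^{b_{F,e}}$, since $b_{F,e}=0$ in this case.

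Now suppose $e$ is the maximum of $z(F)$. Then $e$ is automatically maximal in $\Sep(P,Q)$ whenever $e\in\Sep(P,Q)\subseteq z(F)$. Using \ref{pro:contraction} together with the two block formulas for $M^e$ from the proof of \ref{lem:fac} --- noting that the $\pm$-symmetry in \ref{pro:contraction} equates $\mu((\hat 0,Q)_{R,e})$ and $\mu((\hat 0,-Q)_{-R,e})$ --- and observing that $\Var_{P,Q}=a(F)$ whenever $P|_{z(F)}=-Q|_{z(F)}$ (as then $\Sep(P,Q)=z(F)$), I obtain $X=cB$ and $Y=cB^\top$, where $c:=(-1)^{\rank(\lL|_{z(F)})}a(F)$ and $B$ is the $b_{F,e}\times b_{F,e}$ permutation matrix of $\sigma$ viewed as a bijection $\tT^{F,e}_-\to\tT^{F,e}_+$. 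The Schur-complement identity now yields
\[
\det\mM^{F,e}=\det(\Id-YX)=\det(\Id-c^2 B^\top B)=(1-c^2)^{b_{F,e}}=(1-a(F)^2)^{b_{F,e}},
\]
using $B^\top B=\Id$ and $c^2=a(F)^2$.

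The main obstacle is the bookkeeping required to recognize $X$ and $Y$ as scalar multiples of the same permutation matrix (and its transpose). This means carefully matching the two cases of $N$ (hence of $M^e$) appearing in the proof of \ref{lem:fac} --- one involving $\mu((\hat 0,Q)_{R,e})$ and the other $\mu((\hat 0,-Q)_{-R,e})$ --- via the $\pm$-symmetry of \ref{pro:contraction}. Once this identification is done, the determinant computation is essentially immediate linear algebra.
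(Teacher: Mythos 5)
Your proof is correct and rests on the same two observations as the paper's: the entry formula from \ref{pro:contraction} together with the fixed-point-free involution $P\mapsto F\circ(-P)$, and the case split on whether $e$ is the maximum of $z(F)$. The only cosmetic difference is the finishing linear algebra — you order by the sign of $P_e$ and invoke a Schur complement, whereas the paper orders so that each pair $\{R, F\circ(-R)\}$ is consecutive, making $\mM^{F,e}$ block diagonal with $2\times 2$ blocks of determinant $1-a(F)^2$; these are the same computation in slightly different packaging.
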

\begin{proof}
  By definition of $\mM^e$ we obtain that for $Q,R \in \tT^{F,e}$ we
  have
  $$
    \mM^e_{Q,R} = 
       \left\{ 
          \begin{array}{ccc}
             1 & \mbox{~if~} & Q = R \\
             -\mu((\hat{0},Q)_{R,e}) \cdot \Var_{Q,R} & \mbox{~if~} & 
		 e \mbox{~is the largest element of~} \Sep(Q,R), \\
               &             & R \in \tT(\{e\},\emptyset),  Q \in \tT(\emptyset,\{e\})  \\
             -\mu((\hat{0},-Q)_{-R,e}) \cdot \Var_{Q,R} & \mbox{~if~} & 
		 e \mbox{~is the largest element of~} \Sep(Q,R), \\
               &             &-R \in \tT(\{e\},\emptyset), -Q \in \tT(\emptyset,\{e\})  \\
		     0 & \mbox{~otherwise} & 
		  \end{array}
	       \right. .
	  $$

  If $Q_{z(F)} = -R_{z(F)}$ then $\Var_{Q,R} = a(F)$.
  Using~\ref{pro:contraction} we find
  \begin{eqnarray*} 
    \mM^e_{Q,R} = 
       \left\{ 
          \begin{array}{ccc} 
                             1 & \mbox{~if~} & Q = R \\ 
             -(-1)^{\rank(\lL|_S)} a(F) & \mbox{~if~} & Q = F \circ (-R) \\
                                       &             & e \mbox{ largest element of } \Sep(Q,R) \\
                             0 & \mbox{~otherwise~} & 
          \end{array} \right. .
  \end{eqnarray*} 
  We order rows and columns of $\mM^{F,e}$ so that the elements 
  $R$ and $F \circ (-R)$ are  
  paired in consecutive rows and columns.
  With this ordering $\mM^{F,e}$ is a block diagonal
  matrix having along its diagonal $b_{F,e}$ two by two matrices 
  $$\left( \begin{array}{cc}
        1                         & -(-1)^{\rank(\lL|_{z(F)})} a(F) \\
        -(-1)^{\rank(\lL|_{z(F)})} a(F) & 1 
           \end{array} \right)
  $$
  if $e$ is the maximal element of $z(F)$ and 
  identity matrices otherwise.  In any case we find
  $\det(\mM^{F,e}) = (1-a(F)^2)^{b_{F,e}}$ as desired.
\end{proof}
    
\begin{Lemma}
  \label{lem:block}
  After suitably ordering $\tT$ the matrix $\mM^e$ is the block 
  lower triangular matrix
  with the matrices $\mM^{F,e}$ for $F \in \lL$ with $F_e = 0$ and
  $\tT^{F,e} \neq \emptyset$ on the main
  diagonal.
\end{Lemma}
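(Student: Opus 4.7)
The plan is to partition $\tT$ into the classes $\tT^{F,e}$, order them compatibly with the covector order on $\lL$, and verify that all off-block entries of $\mM^e$ vanish. That $\{\tT^{F,e} : F_e=0,\ \tT^{F,e}\neq\emptyset\}$ partitions $\tT$ is an easy check: for any tope $P$, the set of covectors $F\leq P$ with $F_e=0$ contains $\nnull$, and if $F_1,F_2$ are two maximal elements, then $F_1\circ F_2$ again lies in this set and dominates both $F_i$ (one verifies $F_1\circ F_2\leq P$ coordinatewise and $(F_1\circ F_2)_e=0$), so maximality forces $F_1=F_1\circ F_2=F_2$.

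Having the partition, I would order the nonempty classes by any linear extension of the covector partial order restricted to $\{F\in\lL : F_e=0\}$. The claim is then that with this ordering $\mM^e$ is block lower triangular; concretely, whenever $Q\in\tT^{F,e}$, $R\in\tT^{F',e}$ with $F\neq F'$ and $\mM^e_{Q,R}\neq 0$, one has $F'<F$. First consider $Q\in\tT(\emptyset,\{e\})$ and $R\in\tT(\{e\},\emptyset)$, where $\mM^e_{Q,R}\neq 0$ requires $\mu((\hat{0},Q)_{R,e})\neq 0$. If $e$ defines no proper face of $R$, then $F'=\nnull$ and $F'<F$ since $F\neq F'$. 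Otherwise, let $G\leq R$ be the maximal covector with $G_e=0$; by definition $G=F'$, and \ref{thm:moebius} forces $Q\in\cstar(F')$, i.e.\ $F'\leq Q$. The maximality of $F$ among covectors with $F\leq Q$ and $F_e=0$ then yields $F'\leq F$, strict since $F\neq F'$. The block with $Q\in\tT(\{e\},\emptyset)$ and $R\in\tT(\emptyset,\{e\})$ is handled identically using the entry $-\mu((\hat{0},-Q)_{-R,e})\Var_{Q,R}$ together with the observation that negation is an order isomorphism of $\lL$, so $Q\in\tT^{F,e}$ if and only if $-Q\in\tT^{-F,e}$.

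With this ordering, every nonzero entry outside the diagonal blocks lies strictly below the diagonal, and the diagonal blocks are $\mM^{F,e}$ by definition. The only substantive step is the implication ``$\mu((\hat{0},Q)_{R,e})\neq 0 \Rightarrow F'\leq Q$'' supplied by \ref{thm:moebius}; everything else is routine bookkeeping of the partition and the covector order.
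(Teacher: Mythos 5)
Your proof is correct and follows essentially the same approach as the paper's: order the blocks $\tT^{F,e}$ by a linear extension of the covector order, and use \ref{thm:moebius} to show that a nonzero Möbius number forces $Q\in\cstar(F')$, whence $F'\leq F$ by maximality. The paper additionally notes the trivial case $Q_e=R_e$, $Q\neq R$ (the identity blocks give a zero entry there), which you tacitly skip but which is immediate; otherwise your argument matches the paper's, including the reduction of the $(+,-)$ sign case to the $(-,+)$ case via the order-preserving involution $X\mapsto -X$.
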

\begin{proof}
  We fix a linear ordering of $\tT$ such that for fixed $e \in E$ and $F \in \lL$ the 
  topes from  $\tT^{F,e}$ form an interval 
  and such that the topes from $\tT^{F,e}$ precede those of
  $\tT^{F',e}$ if $F < F'$.

  For this order the claim follows if we show that the entry $(\mM^e)_{Q,R}$ is zero 
  whenever $R \in \tT^{F,e}$, $Q \in \tT^{F',e}$ and $F' < F$.

  If $Q_e = R_e$ then by $Q \neq R$ we have 
  $(\mM^e)_{Q,R} = 0$. Hence it suffices to consider the case $Q_e \neq R_e$.  
 
 If $Q \not\in \cstar(F),\, Q \in \tT(\emptyset,\{e\})$ and $R\in \tT(\{e\},\emptyset)$ then 
  it follows from \ref{thm:moebius} that 
  $\mu((\hat{0},Q)_{R,e}) = 0$ and therefore $(\mM^e)_{Q,R} = 0$. 
  Analogously if $Q \not\in \cstar(F),\,-Q \in \tT(\emptyset,\{e\})$ and $-R\in \tT(\{e\},\emptyset)$ then 
  $\mu((\hat{0},-Q)_{-R,e}) = 0$ and therefore $(\mM^e)_{Q,R} = 0$. 

  On the other hand, if $Q \in \cstar(F)$, then in particular $F \leq
  Q$. Since by definition of $\tT^{F',e}$ we have that $F'$ is the
  maximal covector such that $F' \leq Q$ and $F'_e=0$ it follows that
  $F \leq F'$.  Since $F \neq F'$ we must have that $F < F'$, i.e.\
  $(\mM^e)_{Q,R}$ is an entry above the diagonal and we are done.

\end{proof}

\begin{proof}[Proof of \ref{thm:varchenko}]
  After fixing a linear order on $E$ it follows from \ref{pro:fac} 
  that $\det \Var$ is 
  the product of the determinants of $\mM^e$ for $e \in E$.  
  By \ref{lem:block} the determinant of each $\mM^e$ is
  a product of determinants of $\mM^{F,e}$ for $e \in E$ and $F \in \lL$
  for which $\tT^{F,e} \neq \emptyset$. Then \ref{lem:formula} completes
  the proof.
\end{proof}

As an immediate consequence of the proof we can give a refined version
of \ref{thm:varchenko} which also implies that $\det(\Var)$ only depends
on the matroid $\Mat(\lL)$ underlying the oriented matroid $\lL$.

Let us first state an additional fact about an oriented matroid $\lL$
over ground set $E$ and its underlying matroid $\Mat(\lL)$. 
Recall, that for a fixed $e \in E$ the bounded topes
  of the affine oriented matroid defined by $e$ in $\lL$ are the topes
  in $\lL$ for which $e$ does not define a proper face. By
  \cite[Theorem 4.6.5]{thebook} the cardinality of this set of topes
  is given by twice the $\beta$-invariant
  $\beta(\Mat(\lL))$ of $\Mat(\lL)$ and hence is independent of $e$.

\begin{Corollary}
   \label{cor:varchenko}
   Let $\Var$ be the Varchenko matrix of the oriented matroid with covector set $\lL$ and $M = \Mat(\lL)$ the matroid 
   underlying $\lL$.
   Then
   \begin{eqnarray*}
     \det (\Var) & = & \prod_{A \subseteq E \atop A \text{ is closed in } M} (1-\prod_{e \in A} U_e^2)^{m_A}, 
   \end{eqnarray*}
   where $m_A$ is the product of number of topes in the
   contraction $\lL / A$ and of 
   $\beta(\Mat(\lL|_A))$. In particular, 
   it follows that $\det (\Var))$ only depends on the matroid $M = \Mat(\lL)$.
\end{Corollary}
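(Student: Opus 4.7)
The plan is to repackage the exponents $b_F$ from \ref{thm:varchenko} in terms of matroid invariants of $M=\Mat(\lL)$. From the proof of \ref{thm:varchenko} (combining \ref{lem:block} and \ref{lem:formula}) one reads off $\det(\Var)=\prod_{F\in\lL}(1-a(F)^2)^{b_F}$ with $b_F=\frac{1}{2}\#\tT^{F,\max z(F)}$. Since $a(F)=\prod_{e\in z(F)}U_e$ depends only on the set $A:=z(F)$, and since the zero sets of covectors of $\lL$ are precisely the flats of $M$, the corollary reduces to verifying, for every flat $A\subseteq E$, the identity
\[\sum_{F\in\lL,\ z(F)=A} b_F\ =\ \#\tT(\lL/A)\cdot\beta(\Mat(\lL|_A)).\]

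First I would count covectors with a prescribed flat as zero set: the map $F\mapsto F|_{E\setminus A}$ sets up a bijection between $\{F\in\lL:z(F)=A\}$ and $\tT(\lL/A)$. Injectivity is immediate, since any such $F$ vanishes on $A$ and is therefore determined by its restriction to $E\setminus A$. Surjectivity uses the characterisation of topes of the contraction as restrictions of covectors that vanish on $A$ and are nonzero off $A$, which forces $z(F)=A$ exactly. This accounts for the factor $\#\tT(\lL/A)$.

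Next I would compute $b_F$ for a single $F$ with $z(F)=A$, taking $e=\max A$. The idea is to transfer the problem to the restriction $\lL|_A$ via the bijection $P\mapsto P|_A$ between $\{P\in\tT:P\ge F\}$ and $\tT(\lL|_A)$, whose inverse sends $T\in\tT(\lL|_A)$ to $F\circ X$ for any lift $X\in\lL$ of $T$. The key translation is that $P\in\tT^{F,e}$ if and only if $e$ does not define a proper face of $P|_A$ in $\lL|_A$; this holds because covectors $F'\in\lL$ with $F<F'\le P$ and $F'_e=0$ correspond, by restriction to $A$, bijectively to nonzero covectors $\tilde F\in\lL|_A$ with $\tilde F\le P|_A$ and $\tilde F_e=0$. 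Granting this translation, the fact recalled just before the corollary (\cite[Theorem~4.6.5]{thebook}) identifies $\#\tT^{F,e}$ with the number of bounded topes of the affine oriented matroid defined by $e$ in $\lL|_A$, namely $2\beta(\Mat(\lL|_A))$. Hence $b_F=\beta(\Mat(\lL|_A))$, and combining with the first step yields $m_A=\#\tT(\lL/A)\cdot\beta(\Mat(\lL|_A))$ as required.

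Matroid-invariance of $\det(\Var)$ is then automatic: the set of flats of $M$, the tope count of any contraction of $\lL$ (a classical matroid invariant), and the $\beta$-invariant all depend only on the underlying matroid. I expect the main obstacle to be the order-theoretic translation in the third paragraph: one must verify that the restriction-and-composition correspondence between covectors of $\lL$ above $F$ and covectors of $\lL|_A$ preserves the three relations $F'>F$, $F'\le P$, and $F'_e=0$ simultaneously. This is precisely where flatness of $A$ and the elementary composition identities for covectors enter; once it is checked, the rest is bookkeeping.
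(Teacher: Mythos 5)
Your proposal is correct and follows essentially the same route as the paper: express $m_A=\sum_{z(F)=A}b_F$, identify the number of summands with $\#\tT(\lL/A)$ via $F\mapsto F|_{E\setminus A}$, pass to the restriction $\lL|_A$ via $P\mapsto P|_A$ to recognize $\tT^{F,e}$ as the bounded topes of the affine oriented matroid $(\lL|_A,e)$, and invoke \cite[Theorem~4.6.5]{thebook} to get $b_F=\beta(\Mat(\lL|_A))$. The only difference is that you spell out the bijection arguments (and the surjectivity/injectivity checks) in more detail than the paper does; the approach and conclusion are identical.
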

\begin{proof}
  Fix $A \subseteq E$. Using the notation of \ref{thm:varchenko} it follows
  that $m_A = \sum_{\genfrac{}{}{0pt}{}{F \in \lL}{z(F) = A}} b_F$.
  The number of summands equals the number of topes in the contraction
  $\lL/A$ of $A$. The latter only depends on $\Mat(\lL/A)$ which only depends on 
  $\Mat(\lL)$. By \ref{lem:formula} we have $b_F = b_{F,e}$ where
  $e$ is the maximal element of $z(F)$. Now $b_{F,e}$ is half the
  number of elements of $\tT^{F,e}$, which is the set of topes $P$ 
  for which $F$
  is the unique maximal element of $\lL$ such that $F \leq P$ and $F_e = 0$.
  The map sending $P$ to $P_{z(F)}$ is then a bijection between the
  topes in $\tT^{F,e}$ and the topes of $\lL|_{z(F)}$
  for which $e$ does not define a proper face. 
  As mentioned above, by \cite[Theorem 4.6.5]{thebook} the number
  of topes in $\lL|_{z(F)}$ for which $e \in z(F)$ does not define a proper 
	face in $\lL|_{z(F)}$ is independent of $e$ and coincides with twice
  the beta invariant $\beta(\Mat(\lL|_{z(F)}))$.
  Hence we find that $b_{F,e} =
  \beta(\Mat(\lL|_{z(F)}))$.
  Since for a nonempty subset the matroid $\Mat(\lL|_A)$ depends on $A$ and $\Mat(\lL)$ only 
  it follows that $m_A$ is an invariant of $\Mat(\lL)$. 
\end{proof}

Finally, as a second corollary we extend \ref{thm:varchenko} to row
and column selected submatrices of $\Var$ corresponding to topes in a
closed supertope. 
For oriented matroids coming from hyperplane arrangements this
formula can also be found in \cite{AM} and \cite{G}.

\begin{Corollary}
   \label{cor:conevarchenko}
   Let $\Var$ be the Varchenko matrix of the oriented matroid with
   covector set $\lL$.  For a subset $E' \subseteq E$ and signs
   $\epsilon=(\epsilon_e)_{e \in E'} \in \{+,-\}^{E'}$ such that
   $\tT(\epsilon^+,\epsilon^-)$ is a closed supertope 
   let $\Var_{\epsilon}$ be the
   matrix constructed from $\Var$ by selecting all rows and columns
   corresponding to topes $P \in \tT$ for which $P_e = \epsilon_e$ for
   $e \in E'$.
 
   Then
   \begin{eqnarray*}
     \det (\Var_\epsilon) & = & 
\prod_{\genfrac{}{}{0pt}{}{F \in \lL}{F_e \neq 0, e \in E'}} (1-a(F)^2)^{b_{F,\epsilon}}, 
   \end{eqnarray*}
   for some numbers $b_{F,\epsilon}$.
\end{Corollary}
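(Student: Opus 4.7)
The plan is to mirror the argument for \ref{thm:varchenko} while restricting every intermediate object to the closed supertope $\tT^\epsilon := \tT(\epsilon^+,\epsilon^-)$. The key preliminary observation is that for $P,Q \in \tT^\epsilon$ and $f \in E'$ one has $P_f = Q_f = \epsilon_f$, so $\Sep(P,Q) \subseteq E \setminus E'$. Consequently every off-diagonal entry of $\Var_\epsilon$ is a monomial in the variables $\{U_e : e \in E \setminus E'\}$ only. I fix a linear order on $E$ for which $E \setminus E' = \{e_{k+1} \prec \cdots \prec e_r\}$ occupies the top, and for each such $e$ I let $\tilde\mM^e$ denote the principal submatrix of $\mM^e$ on rows and columns indexed by $\tT^\epsilon$.

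The central technical ingredient will be a supertope-restricted version of \ref{cor:crucial}: for $P,R \in \tT^\epsilon$ with $R_e = -P_e$ and any $e \notin S \subseteq E \setminus E'$, the sum of $\mu((\hat 0,Q)_{R,e})$ over $Q \in \tT^\epsilon$ with $Q_e = -R_e$ and $\Sep(P,Q)\cap\Sep(Q,R)=S$ equals $-1$ for $S=\emptyset$ and $0$ otherwise. I intend to prove this by repeating the inductive argument of \ref{cor:crucial} verbatim; the only thing to verify is that the auxiliary set $\{Q \in \tT : Q_e = -R_e,\, \Sep(P,Q)\cap\Sep(Q,R)\subseteq S,\, Q_{e'}=\epsilon_{e'} \text{ for all } e' \in E'\}$ is itself a supertope in $\lL$, since each of its defining conditions fixes a single coordinate to a prescribed sign. \ref{thm:supertope} then supplies contractibility and the vanishing of the M\"obius number, exactly as in the unrestricted case. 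This is the single place where the hypothesis on $\epsilon$ enters, and I expect it to be the main obstacle --- though once one recognises that the supertope structure is preserved under the further sign constraints, it is essentially bookkeeping.

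With this in hand I iterate the analog of \ref{lem:fac} to obtain $\Var_\epsilon = \tilde\mM^{e_{k+1}} \cdots \tilde\mM^{e_r}$: after setting $U_e = 0$ for all $e \in E \setminus E'$, every off-diagonal entry of $\Var_\epsilon$ vanishes, so the process terminates at the identity matrix. Each $\tilde\mM^e$ inherits the block-triangular form of \ref{lem:block} with blocks $\tilde\mM^{F,e} = \mM^{F,e}|_{\tT^\epsilon}$ indexed by covectors $F$ having $e$ maximal in $z(F)$. To evaluate $\det(\tilde\mM^{F,e})$ I will examine the pairing $P \mapsto F \circ (-P)$ from \ref{lem:formula} on $\tT^{F,e} \cap \tT^\epsilon$: if $F_{e'} \neq 0$ for every $e' \in E'$ then $F_{e'} = \epsilon_{e'}$ and $F \circ (-P) \in \tT^\epsilon$, so the full $2\times 2$ blocks are retained and contribute $(1-a(F)^2)^{b_{F,e,\epsilon}}$ with $b_{F,e,\epsilon} := \frac12 \#(\tT^{F,e} \cap \tT^\epsilon)$; if instead $F_{e'} = 0$ for some $e' \in E'$ then $F \circ (-P)$ has sign $-\epsilon_{e'}$ at $e'$ and lies outside $\tT^\epsilon$, so each such pair collapses to a $1\times 1$ identity block and contributes $1$. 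Since $z(F) \cap E' = \emptyset$ is equivalent to $F_{e'} \neq 0$ for all $e' \in E'$, and in that case the maximum of $z(F)$ automatically lies in $E \setminus E'$, multiplying across all $e \in E \setminus E'$ and defining $b_{F,\epsilon} := b_{F,e,\epsilon}$ for the corresponding $e$ yields the stated product formula.
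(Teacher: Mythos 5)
Your proposal is correct, but it takes a genuinely different route from the paper. The paper proves the case $E'=\{e\}$ by observing that $\Var_{U_e=0}$ is block diagonal with two copies of $\Var_\epsilon$ (using the involution $P\mapsto -P$), so $\det(\Var_\epsilon)^2=\det(\Var_{U_e=0})$; applying \Ref{thm:varchenko} to the right-hand side and extracting a square root (legitimated by the observation that $\det(\Var_\epsilon)$ has constant term $1$) gives the formula with exponents $b_F/2$, and the general case is then asserted to follow ``by induction on $\#E'$.'' You instead redo the entire factorization machinery of \Ref{sec:varchenko} with every object restricted to $\tT^\epsilon$, arriving at $\Var_\epsilon=\tilde\mM^{e_{k+1}}\cdots\tilde\mM^{e_r}$ and then reading off block determinants directly.

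Two remarks on your argument. First, the supertope-restricted version of \Ref{cor:crucial} is even more immediate than you suggest: for $P,R\in\tT^\epsilon$ and $S\subseteq E\setminus E'$, any $Q$ with $\Sep(P,Q)\cap\Sep(Q,R)\subseteq S$ automatically satisfies $Q_{e'}\in\{P_{e'},R_{e'}\}=\{\epsilon_{e'}\}$ for every $e'\in E'$ and hence already lies in $\tT^\epsilon$; so the restricted sum \emph{equals} the unrestricted one and \Ref{cor:crucial} applies verbatim, no new induction needed. Second, the complementary observation that a tope $Q\in\tT(\emptyset,\{e\})$ outside $\tT^\epsilon$ forces $\Sep(P,Q)\cap\Sep(Q,R)\cap E'\ne\emptyset$ is exactly what makes the restricted analogue of \Ref{lem:fac1} close up, and you should state it when iterating.

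What each approach buys: the paper's argument is short and reuses \Ref{thm:varchenko} as a black box, but the induction on $\#E'$ is left implicit and is less innocent than it appears --- once $\#E'>1$ the two diagonal blocks of $\Var_{\epsilon_0}|_{U_e=0}$ are $\Var_\epsilon$ and $\Var_{\epsilon'}$ with $\epsilon'_e=-\epsilon_e$, which are \emph{not} related by $P\mapsto -P$ and need not be equal, so the squaring trick does not iterate without further argument. Your approach is longer, but it handles all of $E'$ simultaneously and therefore avoids the iterated square root entirely; it also produces the explicit exponent $b_{F,\epsilon}=\tfrac12\#(\tT^{F,e}\cap\tT^\epsilon)$, whose integrality is visible from the fixed-point-free involution $P\mapsto F\circ(-P)$ on $\tT^{F,e}\cap\tT^\epsilon$ (this is where the hypothesis $z(F)\cap E'=\emptyset$ is used). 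Both proofs are correct in spirit, but yours is the more complete of the two.
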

\begin{proof}
  Consider the case $E' = \{e\}$. Order  
  $\tT = \{ P_1 \prec \cdots \prec P_{2s}\}$ such that 
  $(P_{1})_e = \cdots = (P_s)_e = +$ and $P_{i+s} = -P_i$ for
  $1 \leq i \leq s$. Then $\Var_\epsilon$ is a block diagonal matrix
  with two blocks identical to $\Var_\epsilon$ on the main diagonal. By \ref{thm:varchenko} we have
  \begin{eqnarray*}
     \det \Var_{U_e = 0} &  = & 
          \prod_{\genfrac{}{}{0pt}{}{F \in \lL}{F_e \neq 0}} (1-a(F))^{b_F} \\
                         & = & \det(\Var_{\epsilon})^2.
  \end{eqnarray*}
  Since the main diagonal in $\Var_{\epsilon}$ is constant $1$
  and since this are the only constant entries it follows that
  $\det(\Var_{\epsilon})$ has constant term $+1$. It follows that
  \begin{eqnarray*} 
    \det(\Var_{\epsilon}) & = & \prod_{\genfrac{}{}{0pt}{}{F \in \lL}{F_e \neq 0}} (1-a(F))^{\frac{b_F}{2}}. 
  \end{eqnarray*}
  Now induction on the cardinality of $E'$ proves the assertion.
\end{proof}

\begin{Remark}
  If $E'=\{e\}$ in \ref{cor:conevarchenko}, i.e.\ in the case of an affine oriented matroid, then \cite[Theorem 4.6.5]{thebook} implies, that  $\det(\Var_{\epsilon})$ is still a matroid invariant. 
\end{Remark}

\begin{Remark}
  The formulas in \ref{thm:varchenko} and
  \ref{cor:varchenko} are very explicit in terms of combinatorial
  invariants of the matroid and are useful for further analysis of the
  matrix. Nevertheless, it seems computationally hard to write down
  the formulas in concrete cases. 
  We refer to \cite{PR} for an analysis of reflection arrangements of
  the symmetric groups. The case that originally motivated 
  Varchenko's work. 
\end{Remark}

  \begin{Remark}
    Recently, Bandelt et.\ al.\ introduced complexes of oriented
    matroids (COMs) \cite{BCK} as families of signed vectors which
    satisfy the covector elimination axiom and a symmetrized version
    of the axiom of conformal composition of oriented matroid theory.
    A COM is an oriented matroid if and only if it contains $0$.
    Examples of COMs are affine oriented matroids and ``closed
    supertopes without boundary''. The authors even conjecture that
    the latter case characterizes COMs. A generalization of
    \ref{cor:conevarchenko} to COMs would support that conjecture. Our
    proof of supertope contractability uses only covector elimination,
    thus \ref{thm:supertope}, \ref{cor:crucial} and \ref{pro:fac}
    should generalize to COMs. In order to zero out the M\"obius
    function values below the diagonal of the $\mM$ matrices though,
    we frequently use results about the global topology of an oriented
    matroid. An exception is \ref{pro:contraction} which should go for
    COMs. Note that its proof is based on properties of a proper
    contraction of a covector. As such a proper contraction of a COM
    contains 0 it is an oriented matroid.  It is not immediately
    clear, though, that this suffices to generalize
    \ref{cor:conevarchenko} without extending all topological results
    to COMs which should be a non-trivial project on its own.
  \end{Remark}

\section*{Acknowledgement}
The authors thank the referee for providing suggestions that
helped to improve the exposition. Moreover, we are grateful for
pointing us to COMs as a possible direction for generalizations.

\end{document}